\definecolor{aleacolor}{rgb}{0.16,0.59,0.78}
\renewcommand{\cite}{\citet}
\theoremstyle{plain}
\newtheorem{theorem}{Theorem}[section]                                          
\newtheorem{proposition}[theorem]{Proposition}
\theoremstyle{definition}
\newtheorem{definition}[theorem]{Definition}
\theoremstyle{remark}
\newtheorem{remark}[theorem]{Remark}
\makeatletter \@addtoreset{equation}{section} \makeatother
\begin{document}

\title{A two-component copula with links to insurance}

\author[Ismail \textit{et~al.}]{S. Ismail\,$^{1}$\,$^{3}$\footnote{\,$^{3}$ email: {samiha.ismail@gmail.com.}}\and G. Yu\,$^{2}$\,$^{4}$\footnote{\,$^{4}$ email: {Gao.Yu@lloyds.com.}} \and G. Reinert\,$^{1}$,\,$^{5}$\footnote{\,$^{5}$ To whom correspondence should be addressed; email: reinert@stats.ox.ac.uk.}\thanks{GR acknowledges support from EPSRC grant EP/K032402/1 and from the Oxford Martin School.} \and T. Maynard\,$^{2}$\,$^{5}$\footnote{\,$^{5}$ email: {trevor.maynard@lloyds.com.}}}
\address{$^{1}$Department of Statistics, 1 South Parks Road, Oxford OX1 3TG, UK\\
$^{2}$Exposure Management Team, Lloyd's of London, London, UK.}


\subjclass[2010]{62H05, 62P05.} 
\keywords{Copula, Two-Component model, insurance.}

\begin{abstract}
This paper presents a new copula to  model dependencies between insurance entities, by considering how insurance entities are affected by both macro and micro factors. The model used to build the copula assumes that the insurance losses of two companies or lines of business are related through a random common loss factor which is then multiplied by an individual random company factor to get the total loss amounts. The new two-component copula is not Archimedean and it extends the toolkit of copulas for the insurance industry. 
\end{abstract}

\maketitle

%
%

\section{Introduction}

There are many copulas used in the insurance industry to model dependencies between different lines of businesses within or between insurance companies. Many of these copulas are not built with insurance scenarios in mind and often their assumptions break down when modelling a full insurance distribution curve. For example, the Gaussian copula, which is one of the most commonly used copulas in the insurance industry, does not have any upper tail dependence. Thus, this copula cannot model tail correlation within lines of businesses or between insurance entities which are believed to have tail correlation, such as what would be expected between two lines of business which are heavily affected by the same catastrophic event. 

To  provide a better copula solution than what is currently in-use, this paper proposes and analyses a new copula, coined the {\it Two-component (model) copula}. This copula is based on an insurance model where a dependence structure is formulated between two insurance entities by considering the effects of both macro and micro economic factors. The  underlying model of the copula is as follows;
\begin{equation*}
X_1=\sigma_1 WY_1  \text{ and } X_2=\sigma_2 WY_2 
\end{equation*}
where $X_1$ and $X_2$ are the losses experienced by two insurance companies or lines of businesses. $W$ is the macro (common) loss factor and has an exponential distribution with parameter 1 and $Y_1$ and $Y_2$ are the micro (company specific loss) factors modelled to  have inverse gamma distributions with shape parameter $\alpha_1$ and $\alpha_2$ respectively and rate parameter 1; $\sigma_1>0$ and $\sigma_2>0$ are constants, and  $W$, $Y_1$ and $Y_2$ are assumed to be  independent random variables. From this model the Two-component copula is derived; it is given in Theorem \ref{thm:tccopula}. In this paper, as well as analysing the derivation model of the copula, simulated data is generated under the two-componend model and fitted to several different copulas to gauge how different it is to copulas already available. 

The paper is structured as follows.  Chapter 2 contains a background on copulas and provides an outline of the goodness-of-fit (GoF) tests used on the simulated data. In Chapter 3 the {\it Two-component model} is derived, analysed and goodness-of-fit tests are preformed on simulated data generated from the the derivation model of the copula.The Appendix contains a detailed algorithm for the GoF test used for the copulas, as well as the colour palette used in the plots.\\

\section{Background}\label{chap:background}
\subsection{Copulas}
A copula connects the one-dimensional marginal distributions of several random variables to the multivariate distribution function of the variables. Here we concentrate on bivariate distributions. Below is a short overview;  for reference and more details see \cite{introcopula}.

For each of the following definitions we consider functions $H: DomH\subset\overline{\mathbb{R}^2}\to RanH\subset\overline{\mathbb{R}}$ with domain  $DomH = S_1\times S_2$ where $S_1,S_2$ are nonempty; $RanH$ is the range of $H$. We let ${\bf I}=[0,1]$.

\begin{definition} 
Let $B=[x_1,y_1]\times[x_2,y_2]$ be a rectangle whose vertices are in $DomH$, then the {\bf H-volume} of $B$ is given by 
\begin{equation*}
V_H(B)=H(y_1,y_2)-H(y_1,x_2)-H(x_1,y_2)+H(x_1,x_2)
\end{equation*}
$H$ is {\bf2-increasing} if $V_H(B)\geq0$, for all rectangles $B$ whose vertices are in $DomH$.
Suppose $a_i$ is the least element of $S_i$. Then $H$ is {\bf grounded} if $H(a_1,y)=0=H(x,a_2)$, for all $ (x,y)\in DomH$.
\end{definition}

\begin{definition}
A {\bf two-dimensional copula} is a function $C: {\bf I^2}\to \overline{\mathbb{R}}$ such that $C$ is grounded and 2-increasing, and
$
	C(u,1)=u $ and $ C(1,v) = v$  for all 
 $u,v\in {\bf I}$.
\end{definition}

\cite{introcopula} states in Lemma 2.1.4 that if $H$ defined above is grounded and 2-increasing, then $H$ is non-decreasing in each argument. This lemma can be used to show that $RanC = {\bf I}$. 

A key theorem for copulas is Sklar's Theorem, which uses the notion of margins, or marginal distributions. If  $b_i$ is the greatest element of $S_i$, $i=1, 2$, then the {\bf margins} of $H$ are the functions $F$ and $G$ where
$DomF=S_1$ and $F(x)=H(x,b_2)$ for all $x\in S_1,$ whereas 
$DomG=S_2$ and $G(y)=H(b_1,y$ for all $y\in S_2.$

\begin{theorem} \label{thm:sklar}
 [Sklar's Theorem] (Thm 2.3.3 \cite{introcopula}) If $H$ is a joint (cumulative) distribution function with margins $F$ and $G$, then there exists a copula $C$ such that $\forall x,y \in \overline{\mathbb{R}}$,
\begin{equation} \label{sklar}
	H(x,y)=C(F(x),G(y)).
\end{equation}
If $F$ and $G$ are continuous, then $C$ is unique; otherwise, $C$ is uniquely determined on $RanF\times RanG$. Conversely, if $C$ is a copula and F and G are distribution functions, then $H$ defined by \eqref{sklar} is a joint distribution function with margins $F$ and $G$.
\end{theorem}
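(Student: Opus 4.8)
The plan is to prove the two implications separately, following the classical Schweizer--Sklar argument. For the converse direction (the easier one), I would start from a copula $C$ and distribution functions $F,G$ and verify directly that $H(x,y):=C(F(x),G(y))$ is a bivariate distribution function with the stated margins: groundedness and the correct limits at $\pm\infty$ come from $C$ being grounded with $C(u,1)=u$, $C(1,v)=v$ together with the monotonicity and the limiting behaviour of $F$ and $G$; the $2$-increasing property of $H$ is inherited from that of $C$ because $F$ and $G$ are non-decreasing; and $H(x,+\infty)=C(F(x),1)=F(x)$, $H(+\infty,y)=G(y)$ identify the margins.

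For the forward direction I would first treat the case where $F$ and $G$ are continuous. Introduce quasi-inverses, e.g.\ $F^{(-1)}(u):=\inf\{x:F(x)\ge u\}$, and set
\begin{equation*}
C(u,v):=H\bigl(F^{(-1)}(u),G^{(-1)}(v)\bigr),\qquad (u,v)\in{\bf I}^2.
\end{equation*}
The key fact is that continuity of $F$ yields $F\bigl(F^{(-1)}(u)\bigr)=u$ for all $u\in{\bf I}$ (similarly for $G$), so that $C(F(x),G(y))=H(x,y)$; checking that this $C$ is grounded, $2$-increasing and has uniform margins is then a short computation from the corresponding properties of $H$.

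For the general (possibly discontinuous) case, the plan is to pass through a \emph{subcopula}: define $C'$ on $S:=RanF\times RanG$ by $C'(F(x),G(y)):=H(x,y)$. The step requiring care is well-definedness, i.e.\ that $F(x_1)=F(x_2)$ forces $H(x_1,y)=H(x_2,y)$; this follows from the bound $|H(x_1,y)-H(x_2,y)|\le|F(x_1)-F(x_2)|$, itself a consequence of $2$-increasingness together with groundedness and the marginal identity. One then checks that $C'$ is a subcopula and invokes the extension lemma: every subcopula defined on a subset of ${\bf I}^2$ extends to a copula on all of ${\bf I}^2$, the extension being obtained by first closing the domain and then interpolating bilinearly on each rectangle of the complement. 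Uniqueness on $RanF\times RanG$ is immediate from the defining identity, and when $F,G$ are continuous this set is all of ${\bf I}^2$, giving global uniqueness.

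I expect the main obstacle to be the subcopula extension lemma: verifying that the bilinearly interpolated extension is genuinely $2$-increasing (and continuous) on all of ${\bf I}^2$ requires a somewhat delicate case analysis over the grid of rectangles induced by the closures of $RanF$ and $RanG$. Since the statement is quoted verbatim as Theorem~2.3.3 of \cite{introcopula}, within this paper I would simply cite that reference rather than reproduce the full argument.
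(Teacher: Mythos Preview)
Your outline of the classical Schweizer--Sklar argument is correct, and your final remark is exactly on target: the paper does not prove Sklar's Theorem at all but simply states it as Theorem~2.3.3 of \cite{introcopula} and moves on, so the ``proof'' in the paper is just the citation. There is nothing further to compare.
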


Sklar's Theorem shows that a joint distribution can be split into two parts; the respective marginal distributions of the random variables and a dependence relation, given by the copula. Thus, a copula  disentangles the dependence structure of random variables from their marginal distributions. Further, since $F(x)=x$ if $F$ is the margin of a uniform distribution, the set of copulas is the set of joint distribution functions of two $U(0,1)$ random variables evaluated on $[0,1]^2$.

An advantage copulas have over joint distribution functions is that they act predictably under strictly monotone transformations of continuous random variables, see Thm 2.4.3 and 2.4.4 in \cite{introcopula}. In particular we have that if $X$ and $Y$ are continuous random variables, with copula $C_{XY}$ and if $\alpha$ and $\beta$ be strictly decreasing transformations on $RanX$ and $RanY$, respectively, then 
\begin{equation}\label{thm:copTransformations}
C_{\alpha(X)\beta(Y)}(u,v)=u+v-1+C_{XY}(1-u,1-v).
\end{equation} 
 If $\alpha$ and $\beta$ are strictly increasing,then 
\begin{equation}\label{thm:copTransformations2}C_{\alpha(X)\beta(Y)}(u,v)=C_{XY}(u,v).
\end{equation}



Lastly, since in insurance we are concerned with dependence in extreme events (i.e. one-in-two hundred years event), we use a notion of upper tail dependence and show how it relates to copulas.

\begin{definition} \label{def:upperTail}
Let $X$ and $Y$ be continuous random variables with distributions $F$ and $G$, respectively. The upper tail dependence parameter $\lambda_U$ is the limit (if it exists) of the conditional probability that $Y$ is greater than the 100t$^{th}$ percentile of $G$ given that $X$ is greater than the 100t$^{th}$ percentile of $F$ as $t$ approaches 1, i.e.
\begin{equation}\label{equ:uppertaildependence}
\lambda_U=\lim_{t\rightarrow1^-}P[Y>G^{(-1)}(t)|X>F^{(-1)}(t)].
\end{equation}
\end{definition}

\begin{theorem}\label{thm:uppertaildependence}
(Thm 5.4.2 \cite{introcopula}) Let X, Y, F, G and $\lambda_U$ be as defined in Definition \ref{def:upperTail}, and let $C$ be the copula of $X$ and Y. If the limit of Equation (\ref{equ:uppertaildependence}) exists, then 
$$
\lambda_U=2-\lim_{t\rightarrow1^-}\frac{1-C(t,t)}{1-t}=2-\delta_C'(1^-),
$$
where $\delta_C(t)=C(t,t)$ for $t\in[0,1]$.
\end{theorem}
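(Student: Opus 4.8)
The plan is to rewrite the conditional probability in \eqref{equ:uppertaildependence} entirely in terms of the copula $C$ using Sklar's Theorem, obtain a closed form valid for every $t\in(0,1)$, and then recognise the resulting limit as the one-sided derivative $\delta_C'(1^-)$.

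First I would fix $t\in(0,1)$ and write
$$
P[Y>G^{(-1)}(t)\mid X>F^{(-1)}(t)]=\frac{P[X>F^{(-1)}(t),\,Y>G^{(-1)}(t)]}{P[X>F^{(-1)}(t)]}.
$$
Because $F$ is continuous we have $P[X>F^{(-1)}(t)]=1-F(F^{(-1)}(t))=1-t$, which is strictly positive for $t<1$, so the conditioning event has positive probability and the quotient is well defined. For the numerator I would apply the inclusion--exclusion identity $P[X>a,\,Y>b]=1-P[X\le a]-P[Y\le b]+P[X\le a,\,Y\le b]$ with $a=F^{(-1)}(t)$ and $b=G^{(-1)}(t)$, and then invoke Sklar's Theorem (Theorem \ref{thm:sklar}) together with the continuity of $F$ and $G$ to replace $P[X\le a,\,Y\le b]=H(a,b)$ by $C(F(a),G(b))=C(t,t)$. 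This gives numerator $=1-2t+C(t,t)$.

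Combining the two pieces yields, for all $t\in(0,1)$,
$$
P[Y>G^{(-1)}(t)\mid X>F^{(-1)}(t)]=\frac{1-2t+C(t,t)}{1-t}=2-\frac{1-C(t,t)}{1-t},
$$
where the second equality is a one-line rearrangement. Letting $t\to1^-$ gives $\lambda_U=2-\lim_{t\to1^-}\frac{1-C(t,t)}{1-t}$. Finally, since $C(1,1)=1$ we may write $1-C(t,t)=\delta_C(1)-\delta_C(t)$ and $1-t=-(t-1)$, so the difference quotient equals $\frac{\delta_C(t)-\delta_C(1)}{t-1}$, whose limit as $t\to1^-$ is by definition $\delta_C'(1^-)$; note that the hypothesis that the limit in \eqref{equ:uppertaildependence} exists is exactly what is needed to guarantee this one-sided derivative exists. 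This establishes $\lambda_U=2-\delta_C'(1^-)$.

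The inclusion--exclusion step and the algebra are routine; the step I would be most careful about is the handling of the quasi-inverses $F^{(-1)}$ and $G^{(-1)}$. One must use continuity of $F$ and $G$ to justify $F(F^{(-1)}(t))=t$ and $G(G^{(-1)}(t))=t$ (these can fail for general distribution functions with jumps), and to ensure that the $C$ supplied by Sklar's Theorem is genuinely the copula of $(X,Y)$ at the points $(F^{(-1)}(t),G^{(-1)}(t))$. I would also record the observation that for $t$ close to $1$ the event $\{X>F^{(-1)}(t)\}$ still has positive probability, so each conditional probability in the limiting family is legitimately defined.
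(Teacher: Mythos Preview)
Your argument is correct and is exactly the standard derivation: rewrite the conditional probability as a ratio, use continuity of $F$ and $G$ to get $F(F^{(-1)}(t))=G(G^{(-1)}(t))=t$, apply inclusion--exclusion together with Sklar's Theorem to the numerator, and then identify the resulting difference quotient with $\delta_C'(1^-)$. The paper itself does not prove this theorem; it is simply quoted as Thm~5.4.2 of \cite{introcopula}, and your proof coincides with the argument given there.
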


If $\lambda_U\in(0,1]$, then $C$ has {\it upper tail dependence}, otherwise it does not have upper tail dependence.

\subsection{Dependence measures}
The most commonly used dependence measure is Pearson's Correlation, which is not a copula based measure. Pearson's Correlation can give misleading answers if the joint distribution linking two random variables does not have an elliptical distribution and is also not defined for some heavy-tailed distributions as it requires finite variances. Here, following \cite{corDoc}, Definition 5.1, we say that a random $n-$dimensional real vector $\bf{X}$ has an elliptical distribution $E_n(\mu, \Sigma, \phi)$ with parameters $\mu \in \mathbb{R}^n$ and $\Sigma$ a nonnegative definite, symmetric $n \times n$ matrix  if the characteristic function  $\varphi$ of ${\bf{X}}-\mu$ is of the form 
\begin{equation} \label{elliptic} \varphi({\bf{t}}) = \phi({\bf{t}}^T \Sigma {\bf{t}}). \end{equation}

Since it is commonly seen that insurance data comes from a heavy-tailed distribution, in this article we will instead use Kendall's tau to measure dependence. Kendall's tau is in a class of copula-based dependence measures called concordance measures, see \cite{corDoc}, which is defined  for a random vector $(X,Y)$
\begin{equation*}
\tau(X,Y)=P[(X-\widetilde{X})(Y-\widetilde{Y})>0]-P[(X-\widetilde{X})(Y-\widetilde{Y})<0].
\end{equation*}
where $(\widetilde{X},\widetilde{Y})$ is an independent copy of $(X,Y)$.
The next theorem links Kendall's tau to Pearson's correlation, which is useful for Gaussian copulas, see Subsection \ref{subsec:gaussian}. 
 
\begin{theorem} \label{thm:tauR}(Adapted from Thm 5.4 \cite{corDoc}) Let ${\bf X} \sim E_n(\mu,\Sigma,\phi)$ (see \eqref{elliptic}) with $P(X_i=\mu_i)<1$, $P(X_j=\mu_j)<1$ and $rank(\Sigma)\geq2$. Then 
\begin{equation*}
\tau(X_i,X_j)=(1-(P(X_i=\mu_i))^2)\frac{2}{\pi}arcsin(R_{ij}),
\end{equation*}
where R is the linear correlation matrix with terms $R_{ij}:=\Sigma_{ij}/\sqrt{\Sigma_{ii}\Sigma_{jj}}$. In particular, whenever $ 0< Var(X_i), Var(X_j)<\infty$ we have $\rho(X_i,X_j)\equiv R_{ij}$, thus
\begin{equation*}
\tau(X_i,X_j)=(1-(P(X_i=\mu_i))^2)\frac{2}{\pi}arcsin(\rho(X_i,X_j)).
\end{equation*}
\end{theorem}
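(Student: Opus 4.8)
The plan is to reduce $\tau(X_i,X_j)$ to a computation about a two-dimensional centred elliptical vector, and then to a trigonometric integral over the unit circle. Rewriting the definition of Kendall's tau, $\tau(X_i,X_j)=E\big[\operatorname{sign}\big((X_i-\widetilde X_i)(X_j-\widetilde X_j)\big)\big]$, where $(\widetilde X_i,\widetilde X_j)$ is an independent copy of $(X_i,X_j)$. Evaluating the characteristic function \eqref{elliptic} on vectors $\mathbf t$ supported on coordinates $i,j$ shows $(X_i,X_j)\sim E_2\big((\mu_i,\mu_j),\Sigma^{(ij)},\phi\big)$, where $\Sigma^{(ij)}$ is the $2\times2$ matrix with diagonal entries $\Sigma_{ii},\Sigma_{jj}$ and off-diagonal entry $\Sigma_{ij}$. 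Since $\mathbf t\mapsto\phi(\mathbf t^{T}\Sigma\mathbf t)$ is even, $\mathbf X-\mu$ is symmetric about $0$ and $\phi$ is real on the relevant range, so the characteristic function of the difference of two independent copies is $\phi^{2}$; hence $(X_i-\widetilde X_i,X_j-\widetilde X_j)\sim E_2\big(0,\Sigma^{(ij)},\phi^{2}\big)$. The standard stochastic representation of an elliptical vector then writes this difference as $S\,B\mathbf V$ with $\mathbf V$ uniform on the unit circle, $S\ge0$ independent of $\mathbf V$, and $BB^{T}=\Sigma^{(ij)}$; the hypotheses $P(X_i=\mu_i)<1$ and $P(X_j=\mu_j)<1$ force $\Sigma_{ii},\Sigma_{jj}>0$, and when $\operatorname{rank}\Sigma^{(ij)}=2$ the matrix $B$ may be taken invertible (the degenerate case $|R_{ij}|=1$ is handled separately).

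I would then compute $P(S=0)$. As $B$ is invertible, $B\mathbf V\ne0$ a.s., so $P(S=0)=P\big((X_i,X_j)=(\widetilde X_i,\widetilde X_j)\big)$. Applying the same representation to $(X_i,X_j)$ itself, write $(X_i,X_j)=(\mu_i,\mu_j)+S'B\mathbf V$ with radial part $S'\ge0$; since the first row of $B$ is non-zero, $(B\mathbf V)_1$ vanishes only on a null set, so $P(X_i=\mu_i)=P(S'=0)$, and likewise $P(X_j=\mu_j)=P(S'=0)$ — the two atom masses coincide. Representing $(X_i,X_j)$ and $(\widetilde X_i,\widetilde X_j)$ by two independent copies, equality forces $S'_1\mathbf V_1=S'_2\mathbf V_2$: one radial part zero and the other positive is impossible, and two positive radial parts would require the independent uniform angles $\mathbf V_1,\mathbf V_2$ to agree, an event of probability $0$. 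Hence $P(S=0)=\big(P(S'=0)\big)^{2}=\big(P(X_i=\mu_i)\big)^{2}$.

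Conditioning on $S$, we have $\operatorname{sign}\big((S\,B\mathbf V)_1(S\,B\mathbf V)_2\big)=\mathbf 1\{S>0\}\operatorname{sign}\big((B\mathbf V)_1(B\mathbf V)_2\big)$, and since $S$ and $\mathbf V$ are independent, $\tau(X_i,X_j)=P(S>0)\,E\big[\operatorname{sign}\big((B\mathbf V)_1(B\mathbf V)_2\big)\big]=\big(1-(P(X_i=\mu_i))^{2}\big)\,E\big[\operatorname{sign}\big((B\mathbf V)_1(B\mathbf V)_2\big)\big]$. The remaining expectation is unchanged if each coordinate is rescaled by a positive constant, so one may take $BB^{T}$ to have unit diagonal and off-diagonal $R_{ij}$, e.g.\ $B$ its Cholesky factor, and put $\mathbf V=(\cos\Theta,\sin\Theta)$ with $\Theta$ uniform on $[0,2\pi)$. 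Writing $Q=(B\mathbf V)_1(B\mathbf V)_2$, a short computation gives $Q=\tfrac12 R_{ij}+\tfrac12\cos(2\Theta-\arccos R_{ij})$, whence $P(Q>0)=1-\tfrac1\pi\arccos R_{ij}$ and $P(Q<0)=\tfrac1\pi\arccos R_{ij}$, so $E[\operatorname{sign}(Q)]=1-\tfrac2\pi\arccos R_{ij}=\tfrac2\pi\arcsin R_{ij}$, giving the first identity. For $|R_{ij}|=1$ one instead uses that $X_j-\mu_j$ is a.s.\ a fixed nonzero multiple of $X_i-\mu_i$ of sign $\operatorname{sign}(R_{ij})$, together with $\operatorname{rank}\Sigma\ge2$ (which makes $X_i$ continuous apart from its atom at $\mu_i$), to obtain $\tau=\operatorname{sign}(R_{ij})\big(1-(P(X_i=\mu_i))^{2}\big)$, consistent with $\arcsin(\pm1)=\pm\tfrac\pi2$. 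Finally, the $n$-dimensional version of the representation shows that when $0<\operatorname{Var}(X_i),\operatorname{Var}(X_j)<\infty$ the covariance matrix of $\mathbf X$ is a positive multiple of $\Sigma$, so $\rho(X_i,X_j)=\Sigma_{ij}/\sqrt{\Sigma_{ii}\Sigma_{jj}}=R_{ij}$, which yields the second identity.

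The step I expect to be the main obstacle is the atom bookkeeping behind the factor $1-(P(X_i=\mu_i))^{2}$: one must check that each marginal has a single atom, located at its centre, that these atom masses are equal, and that the coincidence event $\{(X_i,X_j)=(\widetilde X_i,\widetilde X_j)\}$ has probability exactly $(P(X_i=\mu_i))^{2}$. This is where $\operatorname{rank}\Sigma\ge2$ is essential — it forces linear functionals of the uniform direction on the sphere to be absolutely continuous, ruling out spurious coincidences; the identity genuinely fails when $\operatorname{rank}\Sigma=1$ and the radial variable has atoms. One also needs the standard fact that the difference of two i.i.d.\ elliptical vectors is elliptical with the same dispersion matrix, which is where the real-valuedness of $\phi$ enters. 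The trigonometric evaluation and the covariance identity are routine by comparison.
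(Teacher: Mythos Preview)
The paper does not supply its own proof of this theorem: it is quoted (``Adapted from Thm~5.4'' of the cited reference) and used as a tool, so there is nothing to compare your argument against line by line.

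That said, your proposal is sound and follows the standard route for this result (the one found, for instance, in Lindskog--McNeil--Schmock and in Fang--Fang--Kotz): pass to the two-dimensional marginal, use the stochastic representation $S\,B\mathbf V$ with $\mathbf V$ uniform on the circle, split off the factor $P(S>0)=1-(P(X_i=\mu_i))^{2}$ by the atom bookkeeping you describe, and reduce the remaining $E[\operatorname{sign}((B\mathbf V)_1(B\mathbf V)_2)]$ to the trigonometric identity $1-\tfrac{2}{\pi}\arccos R_{ij}=\tfrac{2}{\pi}\arcsin R_{ij}$. Your handling of the degenerate case $|R_{ij}|=1$ and your observation that $\operatorname{rank}\Sigma\ge2$ is precisely what makes each $X_i$ atomless away from $\mu_i$ (via the $n$-dimensional representation with a uniform direction on $S^{k-1}$, $k\ge2$) are both correct and are exactly where that hypothesis is consumed. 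The final remark that the covariance of an elliptical vector with finite second moments is a positive scalar multiple of $\Sigma$, so that $\rho(X_i,X_j)=R_{ij}$, is the standard fact that closes the second display.

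One small point of presentation: when you write ``Applying the same representation to $(X_i,X_j)$ itself'', make explicit that the two-dimensional radial variable $S'$ here need not coincide with the $n$-dimensional radial variable, but that this is irrelevant because the identity $P(X_i=\mu_i)=P(S'=0)=P(X_j=\mu_j)$ only uses the two-dimensional representation and the invertibility of $B$.
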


\subsection{Types of copulas}
In this section we discuss  the {\it Gaussian copula} and the {\it Gumbel copula} as a special case of an {\it Archimedean copula}, as well as the class of {\it Extreme-value copulas}. For reference on the Gaussian, Gumbel or Archimedean copulas see \cite{corDoc}. For reference on Extreme-value copulas see \cite{Gudendorf} and \cite{evTest}. All copulas are understood to be bivariate copulas.

\subsubsection {The Gaussian Copula}\label{subsec:gaussian}

The {\it Gaussian copula} with linear correlation matrix $R\in\mathbb{R}^2$ with $R_{12}\neq1$ is given by
\begin{equation*}
C^{Ga}_R(u,v)=\int_{-\infty}^{\Phi^{-1}(u)}\int_{-\infty}^{\Phi^{-1}(v)} \frac{1}{2\pi(1-R^2_{12})^{1/2}}\exp\left(-\frac{s^2-2R_{12}st+t^2}{2(1-R^2_{12})}\right)\mathrm{d}s\mathrm{d}t.
\end{equation*}

Gaussian copulas do not have upper tail dependence (\cite{corDoc}), which suggests that even though they are one of the most common copulas used in insurance, they are not suited to this purpose as the one-in-two hundred year events (important for regulation purposes) are modelled incorrectly. 

The linear correlation matrix is usually estimated by Pearson's correlation taken from the data, but particularly for right heavy tailed distributions this could be skewed by a few large observations.  Further, as Pearson's correlation is not invariant under strictly increasing transformations of the random variables and the copula is, we can have $C_{X_1,X_2}=C_{X_1^2,X_2^2}$, but $\rho({X_1,X_2})\neq\rho({X_1^2,X_2^2})$.
A better estimator for $R_{12}$, can be derived from Theorem \ref{thm:tauR} for elliptical distributions to be $sin(\pi\hat{\tau}(X_1,X_2)/2)$, where $\hat{\tau}(X_1,X_2)$ is the estimate of Kendall's tau estimated from the data. This estimator is more robust than $\hat{\rho}$ as Kendall's tau is invariant under monotone transformations of the data. \cite{corDoc} recommends this estimator of $R_{12}$ for both elliptical and non-elliptical distributions with elliptical copulas. Thus, in later chapters when we compare the Two Component Copula with the Gaussian copula on simulated data we will use this estimator. We note that the  estimate for $R_{12}$ is a valid Gaussian copula parameter only if $\hat{\tau}(X_1,X_2)\neq1$. So a Gaussian copula can only be fitted to $(X_1,X_2)$ by this method if $\hat{\tau}(X_1,X_2)\neq1$.

\subsubsection{The Gumbel Copula}
The family of {\it Archimedean copulas} is defined using the following two definitions:
\begin{definition} \label{pseudo-inverse}
Let $\varphi$ be a continuous, strictly decreasing function from $[0,1]$ to $[0,\infty]$ such that $\varphi(1)=0$. The pseudo-inverse of $\varphi$ is the function $\varphi^{[-1]}:[0,\infty]\rightarrow[0,1]$ given by
\begin{equation*}
\varphi^{[-1]}(t)= \left\{ 
  \begin{array}{l l}
    \varphi^{-1}(t),& \quad \text{for $0\leq t \leq\varphi(0)$,}\\
    0, & \quad \text{for $\varphi(0)\leq t \leq \infty$.}\\
  \end{array} \right.
  \end{equation*}
\end{definition}
In particular, if $\varphi(0)=\infty$, then $\varphi^{[-1]}=\varphi^{-1}$.

\begin{theorem} \label{thm:arch}
(Thm 6.1 \cite{corDoc}) Let $\varphi$ be as in Definition \ref{pseudo-inverse}, and let $\varphi^{[-1]}$ be its pseudo-inverse. Let $C$ be the function from $[0,1]^2$ to $[0,1]$ given by
\begin{equation} \label{equation:arch}
C(u,v)=\varphi^{[-1]}(\varphi(u)+\varphi(v)).
\end{equation}
Then $C$ is a copula if and only if $\varphi$ is convex.
\end{theorem}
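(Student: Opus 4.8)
The plan is to verify the copula axioms for $C(u,v)=\varphi^{[-1]}(\varphi(u)+\varphi(v))$ one by one and to locate the single place where convexity of $\varphi$ is actually used, namely the 2-increasing condition. First I would dispose of the boundary behaviour, which holds irrespective of convexity: since $\varphi(1)=0$ and $\varphi(u)\in[0,\varphi(0)]$ for every $u\in[0,1]$, the definition of the pseudo-inverse gives $C(u,1)=\varphi^{[-1]}(\varphi(u))=\varphi^{-1}(\varphi(u))=u$ and, symmetrically, $C(1,v)=v$; moreover $\varphi(u)+\varphi(0)\ge\varphi(0)$, so the second branch of $\varphi^{[-1]}$ forces $C(u,0)=0=C(0,v)$ and $C$ is grounded. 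Hence Theorem~\ref{thm:arch} reduces to the equivalence ``$C$ is 2-increasing $\iff\varphi$ is convex''.

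Next I would rewrite the rectangle inequality in a shape where convexity is manifest. Write $g:=\varphi^{[-1]}$, a continuous non-increasing function on $[0,\infty]$. Straight from the definition of $V_C$, the requirement $V_C([u_1,u_2]\times[v_1,v_2])\ge0$ for all such rectangles is the same as: for every $u_1\le u_2$ the map $v\mapsto C(u_2,v)-C(u_1,v)$ is non-decreasing. Setting $a:=\varphi(u_2)\le b:=\varphi(u_1)$ and, since $\varphi$ is strictly decreasing, making the change of variable $c:=\varphi(v)\in[0,\varphi(0)]$, this becomes: for all $0\le a\le b\le\varphi(0)$ and $0\le c_1\le c_2\le\varphi(0)$,
\[
g(a+c_1)+g(b+c_2)\ \ge\ g(a+c_2)+g(b+c_1).
\]
Because $(a+c_1)+(b+c_2)=(a+c_2)+(b+c_1)$ while both $a+c_2$ and $b+c_1$ lie in $[a+c_1,\,b+c_2]$, this inequality is precisely the statement that $g$ is convex. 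For the forward implication one writes $a+c_2$ and $b+c_1$ as complementary convex combinations of the endpoints $a+c_1$ and $b+c_2$ and adds the two convexity inequalities; for the converse, the special choice $a=\varphi(1)=0$, $c_1=x$, $b=\tfrac{y-x}{2}$, $c_2=\tfrac{x+y}{2}$ (with $0<x<y<\varphi(0)$, which keeps all four arguments inside $[0,\varphi(0)]$) collapses the inequality to midpoint convexity $g\!\left(\tfrac{x+y}{2}\right)\le\tfrac12\big(g(x)+g(y)\big)$, and continuity of $g$ upgrades this to genuine convexity on $[0,\varphi(0)]$.

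It then remains to pass between convexity of $g=\varphi^{[-1]}$ and convexity of $\varphi$ itself. For a continuous strictly decreasing $\varphi$ the epigraph of $\varphi^{[-1]}$ is the reflection across the diagonal of the epigraph of $\varphi$ (the flat piece of $\varphi^{[-1]}$ beyond $\varphi(0)$, present when $\varphi(0)<\infty$, only appends a horizontal ray and does not destroy convexity), so $\varphi$ is convex iff $\varphi^{[-1]}$ is. Assembling the three pieces --- the boundary conditions always hold, $C$ is 2-increasing iff $g$ is convex, and $g$ is convex iff $\varphi$ is convex --- yields the theorem.

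I expect the genuine work to be bookkeeping rather than a substantial idea. One must keep the substitutions used in the converse inside the interval $[0,\varphi(0)]$ where $g$ agrees with the honest inverse $\varphi^{-1}$, treat separately the cases $\varphi(0)=\infty$ and arguments landing on the constant part of $\varphi^{[-1]}$, and track the two orientation reversals carefully ($u_1\le u_2$ corresponds to $\varphi(u_1)\ge\varphi(u_2)$, and increasing $v$ to decreasing $\varphi(v)$). The analytic heart --- that for a convex $g$ the sum $g(s)+g(t)$ with $s+t$ held fixed is smallest at $s=t$ and increases as the pair spreads apart --- is entirely elementary.
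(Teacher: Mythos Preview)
The paper does not prove Theorem~\ref{thm:arch} at all: it is stated as a quotation of Theorem~6.1 in \cite{corDoc} and immediately followed by the sentence ``Copulas of the form (\ref{equation:arch}) are called \textit{Archimedean copulas}\ldots''. There is therefore no in-paper argument to compare against; your proposal supplies a proof where the authors simply cite one.

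On its own merits your argument is correct and is essentially the classical one (as in Schweizer--Sklar or Nelsen). The decomposition is right: the boundary and grounding conditions hold for every admissible generator, so the whole content sits in the 2-increasing property; the change of variables $a=\varphi(u_2)$, $b=\varphi(u_1)$, $c=\varphi(v)$ turns the rectangle inequality into the ``spread-out'' inequality $g(a+c_1)+g(b+c_2)\ge g(a+c_2)+g(b+c_1)$ for $g=\varphi^{[-1]}$; this is exactly Wright-convexity, equivalent to ordinary convexity under continuity; and for a strictly decreasing continuous $\varphi$ convexity passes to and from its (pseudo-)inverse because reflection in the diagonal carries the epigraph of one to the epigraph of the other. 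Your handling of the two delicate points is also sound: the flat tail of $\varphi^{[-1]}$ beyond $\varphi(0)$ keeps $g$ convex on all of $[0,\infty]$ since $g(\varphi(0))=0$ and the left slope there is $\le 0$, and your midpoint-convexity substitution $a=0$, $b=\tfrac{y-x}{2}$, $c_1=x$, $c_2=\tfrac{x+y}{2}$ does stay inside $[0,\varphi(0)]$. The only cosmetic remark is that the forward implication needs $g$ convex on $[0,2\varphi(0)]$, not merely on $[0,\varphi(0)]$, because $a+c_2$ and $b+c_1$ can exceed $\varphi(0)$; you already cover this with the flat-tail observation, but it is worth saying explicitly at that step rather than deferring it to the epigraph paragraph.
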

Copulas of the form (\ref{equation:arch}) are called {\it Archimedean copulas} and $\varphi$ is called the generator of the copula. From the formula it can be seen that Archimedean copulas are symmetric ($C(u,v)=C(v,u)$  for all $u,v\in[0,1]$).

The Gumbel copula is an Archimedean copula with 
$$
C_\theta^{Gum}(u,v) = \exp(-[(-\ln u)^\theta+(-\ln v)^\theta]^{1/\theta}).
$$
Gumbel copulas have an upper tail dependence coefficient of $2-2^{1/\theta}$ (\cite{corDoc}). Using Theorem 6.5 in \cite{corDoc}, if $X$ and $Y$ are random variables with a Gumbel copula with parameter $\theta$, then $\theta=(1-\tau(X,Y))^{-1}$. This expression is a valid Gumbel parameter only if $\tau(X,Y)\geq0$ and $\tau(X,Y)\neq1$. So if $\hat{\tau}$ is Kendall's tau for $X$ and $Y$ estimated from the data, a Gumbel copula can only be fitted if $1>\hat{\tau}\geq 0$.


\subsubsection{Extreme-Value Copulas}
{\it Extreme-value copulas} occur naturally in extreme event situations, and in contrast to Gaussian or Gumbel copulas do not have to be symmetric (\cite{Gudendorf}). Comparing the Two-component copula to the class of Extreme-value copulas will compliment our tool kit. 

\begin{definition} \label{def:extremValueC} (Thm 6.2.3 \cite{Gudendorf}) A bivariate copula $C$ is an {\it Extreme-value copula} if and only if 
\begin{equation*}
C(u,v)=(uv)^{A(log(v)/log(uv))}, \quad(u,v)\in(0,1]^2\setminus\{(1,1)\},
\end{equation*}
where $A:[0,1]\rightarrow[1/2,1]$ is convex and satisfies $max(t,(1-t))\leq A(t)\leq 1$ for all $t\in[0,1]$.
\end{definition}

The upper and lower bounds of $A$ correspond to perfect independence and dependence, respectively. If $U$ and $V$ have an Extreme-value copula then the conditional probability of $U$ given $V$ is an increasing function of $U$ and vice versa for $V$ given $U$. Kendall's tau of an Extreme-value copula is non-negative and is given by 
$$
\tau=\int_0^1\frac{t(1-t)}{A(t)}dA'(t).
$$
The coefficient of upper tail dependence of an Extreme-value copula simplifies to
$$
\lambda_U=2(1-A(1/2))\in[0,1],
$$
which is a decreasing function of A(1/2).

The Gumbel copula is the only Archimedean copula that is also an Extreme-value copula, with $A(t)=((t^\theta)+(1-t)^\theta)^{1/\theta}$ (\cite{GumA}).  
 
A test specifically designed to test for Extreme-value copulas is described in \cite{evTest}. This test will be used to test the simulated data to see if an extreme value copula is appropriate. This test is different to the main goodness-of-fit method mentioned in subsection \ref{subset:gof}.

\subsection{Goodness-of-fit tests for copulas} \label{subset:gof}
This section describes a method to test the fit of a copula based on \cite{Gof} and \cite{Berg}.

\cite{Gof} assesses the robustness of three goodness-of-fit tests for copulas which are based on the empirical copula process, Kendall's dependence function and the Rosenblatt's transform, respectively. His findings do not specifically show that one test was better than the other. In this study we focus on the goodness-of-fit test based on the empirical copula process because it is the most intuitive test out of the three. 
This test is based on comparing the best parametric copula under the null hypothesis with Deheuvels' empirical copula, which is defined as follows.

\begin{definition}
Let ${\bf U}=(U_1,U_2)^T$ be a vector of any two uniform random variables. Let $(u_{1i},u_{2i})^T$ for $i=1, ...., n$ be an i.i.d. sample of {\bf U} of size n. Then Deheuvels' bivariate empirical copula for {\bf U} is defined as 
\begin{equation}\label{equ:empCopula} 
C_n(v_1,v_2)\equiv\frac{1}{n}\sum_{i=1}^n{\bf1}_{(u_{1i}\leq v_1,u_{2i}\leq v_2)}, \quad v_1,v_2\in[0,1].
\end{equation}
\end{definition}

The empirical copula is similar to the well-known empirical cumulative distribution function (c.d.f.) and converges uniformly to the true underlying copula (\cite{Gof}), making it a (discontinuous) approximation of the true copula. 

To describe the goodness-of-fit test, suppose we have a random vector ${\bf X}=(X_1,X_2)^T$ containing two random variables, and suppose we have $n$ i.i.d. samples of this vector, ${\bf x_i}=(x_{1i},x_{2i})^T$  for $i=1,...,n$. in order to  avoid problems on the $[0,1]^2$ boundary we define a transformed sample as
\begin{equation}\label{eq:transformedsample} 
{\bf u_i}=(u_{1i},u_{2i})^T=\left(\frac{n}{n+1}\hat{F}_1(x_{1i}),\frac{n}{n+1}\hat{F}_2(x_{2i})\right) \text{ for } i=1,...,n, 
\end {equation}
where $
\hat{F}_j(v)=\frac{1}{n}\sum_{i=1}^n{\bf1}_{(x_{ji}\leq v)} \text{ for }j=1,2\text{ and }v\in[0,1]
$ is the empirical one-dimensional c.d.f. at $v$. 
Then, the fit of a parametric copula is assessed using a Cram\'er-von-Mises statistic;
$
\rho_{CvM}\equiv\int_{[0,1]^2}n(C_n({\bf v})-C_{\hat{\theta}}({\bf v}))^2\mathrm{d}{\bf v},
$
where $C_n$ is Deheuvels's bivariate empirical copula and $C_{\hat{\theta}}$ is the best fitting parametric copula from the parametric copula family that contains the true copula under $H_0$. The parameter of this copula ($\hat{\theta}$) is estimated using the transformed sample ($(u_{1i},u_{2i})^T$). In this study the test statistic is approximated empirically by
\begin{equation}\label{equ:CvM}
\hat{\rho}_{CvM}\equiv\sum_{i=1}^n(C_n((u_{1i},u_{2i}))-C_{\hat{\theta}}((u_{1i},u_{2i})))^2.
\end{equation}
As the distribution of this test statistic is unknown, the $p$-values of the goodness-of-fit test are approximated using a bootstrap method that can be found in Section 3.10 of \cite{Berg}; see Appendix \ref{appendix:bootstrap}.
This test performed well in the power study conducted in \cite{Berg}, where the power of nine goodness-of-fit tests for copulas were compared. 
Note that the test is independent of the assumption on the marginal distributions.

\subsection{The distribution of large insurance losses}
This  subsection explains the properties generally attributed to and a distribution used to describe large insurance losses that help to derive the model in Chapter \ref{chap:copulaMethod}.

\subsubsection{The heavy-tailed property of  large insurance}
For the five biggest insurance losses from $2001-2011$,  the range of the loss figures is  \$57.5 billion, approximately $80\%$ of the largest loss figure, which is \$72.3 billion (Hurricane Katrina).
 Further, the second largest loss, \$35.0 billion (Tohoku earthquake and tsunami)  is less than $50\%$ of the largest loss, according to  \url{http://www.businessinsider.com/the-11-most-expensive-insurance-losses-in-recent-history-2012-2}.
This is a property of right-heavy tailed distributions. There are several definitions for a heavy-tail distribution (see Theorem 2.6 in \cite{pap:heavytail}); in this paper we use the following definition:
\begin{definition} \label{def:heavytail} 
(Adapted from Thm 2.6 and Def 2.4 \cite{pap:heavytail}) The distribution function F is a (right) heavy-tailed distribution if and only if 
\begin{equation*}
\limsup_{x\to\infty}e^{\lambda x}P(X>x)=\infty \quad \forall \lambda >0.
\end{equation*}
\end{definition}

Thus, a distribution is heavy-tailed if extreme right-tail events are more likely to occur in the distribution relative to any exponential distribution.

\subsubsection{The Generalised Pareto Distribution}
The Generalised Pareto Distribution (GPD) is commonly used   to model large insurance losses; for reference see \cite{Pareto}. 
 
\begin{definition}(\cite{embrechts}) A random variable $X$ has a Generalised Pareto Distribution with location parameter $\mu\in\mathbb{R}$, scale parameter $\sigma>0$ and shape parameter $\xi\in\mathbb{R}$ (denoted by $X\sim GPD(\xi,\mu,\sigma)$) if
\begin{spacing}{1.2}
\begin{equation*}
F(x)= \left\{ 
  \begin{array}{l l}
    1-(1+ \frac{\xi(x-\mu)}{\sigma})^{-1/\xi}& \quad \text{for $\xi\neq0$}\\
    1-exp(-\frac{x-\mu}{\sigma}) & \quad \text{for $\xi=0$}\\
  \end{array} \right.
  \end{equation*}
  \end{spacing}
\noindent for $x\geq\mu$ when $\xi\geq0$, and $\mu\leq x \leq \mu-\sigma/\xi$ when $\xi<0$. In particular a random variable $X$ has a Type II Pareto distribution  with location parameter $\mu\in\mathbb{R}$, scale parameter $\sigma>0$ and shape parameter $\alpha>0$ (denoted by $X\sim P(II)(\mu,\sigma,\alpha)$) if its c.d.f. is
$$
F(x)=  1-\left(1+ \frac{(x-\mu)}{\sigma}\right)^{-\alpha}, \quad x \ge \mu.
$$
\end{definition}

Depending on  $\xi$, the GPD is related to one of three  distributions.
\begin{enumerate}
\item
If $\xi>0$ then  $GPD(\xi,\mu,\sigma) \sim P(II)(\mu,\frac{\sigma}{\xi},\frac{1}{\xi})$; 
\item if  $\xi=0$ then $GPD(\xi,\mu,\sigma) -\mu \sim Exp(\frac{1}{\sigma})$;
\item if $\xi<0$ then  $GPD(\xi,\mu,\sigma) -\mu$ is a scaled beta distribution.
\end{enumerate} 
 Since the Exponential and Beta distributions are not  heavy-tailed distributions, the rest of this section focuses on the case $\xi>0$.

Comparing the survival distribution of the Type II Pareto distribution with $e^{\lambda x}x^{-\alpha}$ for any $\lambda, \alpha>0$, we see that it is a heavy-tailed distribution.

A construction of Pareto distributions from other distributions, is a Feller-Pareto distribution, given in Theorem \ref{thm:FP}.

\begin{theorem} \label{thm:FP}
(\cite{Pareto}) Let $\mu\in\mathbb{R}$ and $\sigma, \gamma, \delta_1, \delta_2>0$. Let $U_1\sim\Gamma(\delta_1,1)$ and $U_2\sim\Gamma(\delta_2,1)$ be two independent Gamma distributions. Then 
$$
W=\mu+\sigma\left(\frac{U_1}{U_2}\right)^{\gamma}
$$
has a Feller-Pareto distribution, denoted by $W\sim FP(\mu,\sigma,\gamma,\delta_1,\delta_2)$.
Further, $P(II)(\mu, \sigma, \alpha) \sim FP(\mu, \sigma, 1, 1, \alpha)$. 
\end{theorem}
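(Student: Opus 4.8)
The first displayed assertion is essentially a \emph{definition}: $W=\mu+\sigma(U_1/U_2)^\gamma$ is declared to follow an $FP(\mu,\sigma,\gamma,\delta_1,\delta_2)$ law, so there is nothing to verify there beyond noting that $W$ is a well-defined random variable (finite a.s., since $U_2>0$ a.s.). The substantive content is the final sentence, namely $P(II)(\mu,\sigma,\alpha)\sim FP(\mu,\sigma,1,1,\alpha)$. The plan is to set $\gamma=1$, $\delta_1=1$, $\delta_2=\alpha$, so that $U_1\sim\Gamma(1,1)=Exp(1)$ and $U_2\sim\Gamma(\alpha,1)$, compute the law of the ratio $R:=U_1/U_2$ explicitly, and then read off the law of $W=\mu+\sigma R$ by a location--scale change of variable.

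For the ratio, I would condition on $U_2$. Given $U_2=u$, the variable $R=U_1/u$ has conditional c.d.f. $P(U_1\le ru)=1-e^{-ru}$ for $r\ge 0$, because $U_1$ is standard exponential. Integrating against the $\Gamma(\alpha,1)$ density of $U_2$ gives
\[
F_R(r)=\int_0^\infty\bigl(1-e^{-ru}\bigr)\frac{u^{\alpha-1}e^{-u}}{\Gamma(\alpha)}\,du
=1-\frac{1}{\Gamma(\alpha)}\int_0^\infty u^{\alpha-1}e^{-(1+r)u}\,du
=1-(1+r)^{-\alpha},
\]
where the last step uses the standard Gamma integral $\int_0^\infty u^{\alpha-1}e^{-\beta u}\,du=\Gamma(\alpha)\beta^{-\alpha}$ with $\beta=1+r$. (Equivalently, one may quote that $U_1/U_2$ has the Beta-prime density $r^{\delta_1-1}(1+r)^{-\delta_1-\delta_2}/B(\delta_1,\delta_2)$, which for $\delta_1=1,\delta_2=\alpha$ is $\alpha(1+r)^{-\alpha-1}$ since $B(1,\alpha)=\Gamma(1)\Gamma(\alpha)/\Gamma(1+\alpha)=1/\alpha$, and integrate.) Thus $R\sim P(II)(0,1,\alpha)$, and since $R>0$ a.s. its support is $(0,\infty)$.

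Finally, $W=\mu+\sigma R$ with $\sigma>0$, so for $x\ge\mu$,
\[
F_W(x)=P\!\left(R\le\frac{x-\mu}{\sigma}\right)=1-\left(1+\frac{x-\mu}{\sigma}\right)^{-\alpha},
\]
which is exactly the c.d.f. of $P(II)(\mu,\sigma,\alpha)$ as given in the definition, with the correct support $x\ge\mu$. This establishes $P(II)(\mu,\sigma,\alpha)\sim FP(\mu,\sigma,1,1,\alpha)$.

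I do not expect a serious obstacle here: the argument is a one-line conditioning computation together with a Gamma integral. The only points requiring a little care are the Beta-function bookkeeping ($B(1,\alpha)=1/\alpha$) and keeping track of the location--scale shift so that the support emerges as $x\ge\mu$. If one instead wanted the full density of a \emph{general} $FP(\mu,\sigma,\gamma,\delta_1,\delta_2)$, the same strategy works: apply the change of variables $(u_1,u_2)\mapsto\bigl(\mu+\sigma(u_1/u_2)^\gamma,\,u_2\bigr)$ to the product density of $(U_1,U_2)$, compute the Jacobian, and integrate out $u_2$; but this is not needed for the stated claim.
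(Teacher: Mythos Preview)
Your argument is correct. The conditioning computation for $F_R(r)=1-(1+r)^{-\alpha}$ is clean, and the subsequent location--scale step recovers the $P(II)(\mu,\sigma,\alpha)$ c.d.f. exactly as stated in the paper's definition.

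As for comparison: the paper does not actually give a proof of this theorem. It is stated as a cited result from \cite{Pareto} and used immediately afterwards to build the Two-component model, so there is no ``paper's own proof'' to compare against. Your proof therefore supplies what the paper omits. It is worth noting that the paper does, in passing, perform essentially the same Gamma-integral computation later (in the proof of Theorem~\ref{thm:tccopula}) when it writes $F_{X_i}(x_i)=1-\int_0^\infty F_{Y_i^{-1}}(w\sigma_i/x_i)e^{-w}\,dw=1-(1+x_i/\sigma_i)^{-\alpha_i}$; this is your calculation with the roles of the exponential and gamma variables interchanged in the conditioning, confirming that your approach is exactly in the spirit of how the authors manipulate these objects.
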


\section{The Two-component copula} \label{chap:copulaMethod}
In this section we hypothesise how insurance losses (losses) are dependent and derive and analyse a new copula ({\it Two-component model copula}) that models these hypotheses. The copula is derived by; first building a model ({\it Two-component model}) of an insurance scenario from the hypotheses, then applying Sklar's theorem to find the copula of this model. Lastly, we see how well our GoF tests perform on data generated from the Two-component model.

\subsection{The two-component model} \label{sec:simpleModel}
Preliminary to hypothesising about the dependence structure, we make the following assumptions about the marginal distributions of large insurance losses which are based on well-accepted beliefs.   
\begin{enumerate}
	\item The marginal distributions are GPDs. This assumption is recommended in \cite{embrechts} as it is an Extreme-Value theory distribution.
	\item The GPDs have $\xi\in(0,1]$, hence are Type II Pareto distributions. 
	This assumption arises as it is a  common belief that losses are heavy tailed. 
	\item The GPDs have $\mu =0$. This assumption is plausible as it translates to the assumption that 
	 no profit can be made from an insurance payout.
\end{enumerate}

The hypotheses of how losses are dependent are derived by breaking down the problem for why they would occur. Losses occurs if two conditions hold; firstly, a loss event occurred, and secondly, the loss event was underwritten by the company. For simplicity we assume these are the only two factors affecting a payout (other factors like the possibility of default are ignored). The size of the payout should be proportional to both the size of the event, which should not depend on the company because they occur on a macro level, and the level of business underwritten. So suppose that the insurance losses of two companies or lines of business, 1 and 2, in any given year are represented by the random variables $X_1$ and $X_2$ respectively. Let $W$ be a random variable representing the size of aggregate loss events in a given year, and as the amount of business written which can be affected by loss events differs between syndicates, define two more variables $Y_1$ and $Y_2$ which represent the amount of affected business underwritten in the two loss functions, 1 and 2, respectively. Then we assume that 
$X_1\propto W \mbox{ and } X_1 \propto Y_1 $ as well as $  X_2 \propto W \mbox{ and } X_2 \propto Y_2 .$

Lastly, as a companies write business before loss events happen, we assume $W$ is independent of $Y_1$ and $Y_2$, and further for simplicity we also assume $Y_1$ is independent of $Y_2$.

To construct $(X_i, W, Y_i)$ for $i=1,2$ such that; $X_i\sim P(II)(0,\sigma_i,\alpha_i)$ and is proportional to $W$ and $Y_i$, which are independent, we use the Feller-Pareto construction (Theorem \ref{thm:FP}). Since the shape parameter ($\alpha_i$) differs between loss functions we take $W\sim U_1$ (in the Theorem) and $Y_1 \sim \frac{1}{U_2}$. Our Two-component model is summarised as follows;

{\center \bf Two-component model summary\\}
{\it  Let $W\sim Exp(1)$ represent the size of the loss events that occurs in a given year and $Y_i$, where $(Y_i)^{-1}\sim\Gamma(\alpha_i,1)$, represent the level of underwritten business that can affect the loss function $i$ in a given year, for $i=1,2$. Suppose that $W$, $Y_1$ and $Y_2$ are independent. Define
\begin{eqnarray}
&X_1=\sigma_1WY_1,\label{eq:X1}\\
&X_2=\sigma_2WY_2,\label{eq:X2}
\end{eqnarray} 
\noindent where $\sigma_1,\sigma_2>0$. Then, $X_i\sim P(II)(0, \sigma_i, \alpha_i)$ and models  the loss functions $i$, for $i=1,2$.
}

The assumption that $Y_i$ has an inverse-gamma distribution is plausible as it leads to an arc shaped hazard function ($h(t)$) with limits 0 (as $t\rightarrow0^+$ and $t\rightarrow\infty$) \cite{hazardfunction}, as used in survival analysis and some mixture models \cite{Glen}. Arc shaped hazard functions can be justified in this context as the total amount of business available for underwriting is a limited resource. For small $t$, there is plenty of business for underwriting, so its easy for an insurance company to underwrite more, hence $h(t)$ increases. For large $t$, due to competition, it is difficult to find new business to underwrite so $h(t)$ decreases. 
The assumption that $W$ is exponential is made partly for convenience, but it is plausible to assume that the loss sizes follow a memoryless distribution.

\subsection{The Two-component model copula derivation}
Now we derive the {\it Two-component (model) copula} and some of its properties.

\begin{theorem}\label{thm:tccopula}
The copula for the Two-component model is
\[
 C(u_1,u_2) =
  \begin{dcases}
   u_1+u_2-1+\int^\infty_0F_{G_1}\left(\frac{w}{\left(\left(1-u_1\right)^{-\frac{1}{\alpha_1}}-1\right)}\right)&\\
   \quad\times F_{G_2}\left(\frac{w}{\left(\left(1-u_2\right)^{-\frac{1}{\alpha_2}}-1\right)}\right)e^{-w}dw &\text{if } (u_1,u_2)\in(0,1)^2\\
 u_1 & \text{if } u_1\in[0,1], u_2=1\\
  u_2 & \text{if } u_1=1, u_2\in[0,1)\\
0 & \text{otherwise},
  \end{dcases}
\]
where $G_1\sim\Gamma(\alpha_1,1)$ and $G_2\sim\Gamma(\alpha_2,1)$.
\end{theorem}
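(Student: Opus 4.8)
The plan is to apply Sklar's Theorem (Theorem \ref{thm:sklar}) to the model random vector $(X_1,X_2)$ defined in \eqref{eq:X1}--\eqref{eq:X2}, reducing the problem to computing the joint c.d.f. $H(x_1,x_2)=P(X_1\le x_1,X_2\le x_2)$ and then composing with the marginal quantile functions. Since $W>0$ and $Y_i=1/G_i>0$ almost surely with $G_i\sim\Gamma(\alpha_i,1)$, each $X_i=\sigma_iWY_i$ is continuous and almost surely positive, and by the Feller--Pareto construction (Theorem \ref{thm:FP}) $X_i\sim P(II)(0,\sigma_i,\alpha_i)$; thus its marginal c.d.f. is $F_i(x)=1-(1+x/\sigma_i)^{-\alpha_i}$ on $[0,\infty)$, a strictly increasing bijection onto $[0,1)$ with inverse $F_i^{-1}(u)=\sigma_i\big((1-u)^{-1/\alpha_i}-1\big)$. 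Continuity of the margins makes the copula unique, and for $u_1,u_2\in[0,1)$ one has $C(u_1,u_2)=H\big(F_1^{-1}(u_1),F_2^{-1}(u_2)\big)$, while the values on $\{u_1=1\}\cup\{u_2=1\}$ are pinned down by the copula axioms as $C(u_1,1)=u_1$ and $C(1,u_2)=u_2$.

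The next step is to evaluate $H(x_1,x_2)$ for $x_1,x_2>0$. Writing $G_i=Y_i^{-1}$, the event $\{X_i\le x_i\}=\{\sigma_iWY_i\le x_i\}$ is the same as $\{G_i\ge \sigma_iW/x_i\}$. Conditioning on $W=w$ and using that $G_1,G_2,W$ are independent with $W$ having density $e^{-w}$ on $(0,\infty)$ — together with Tonelli's theorem, since the integrand is nonnegative — I would obtain
\[
H(x_1,x_2)=\int_0^\infty\Big(1-F_{G_1}\big(\sigma_1 w/x_1\big)\Big)\Big(1-F_{G_2}\big(\sigma_2 w/x_2\big)\Big)e^{-w}\,dw ,
\]
where continuity of $F_{G_i}$ gives $P(G_i\ge t)=1-F_{G_i}(t)$ with no endpoint correction.

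Then I would substitute $x_i=F_i^{-1}(u_i)$, so that $\sigma_i w/x_i=w\big/\big((1-u_i)^{-1/\alpha_i}-1\big)$, expand the product in the integrand as $1-F_{G_1}-F_{G_2}+F_{G_1}F_{G_2}$, and integrate term by term. The constant term contributes $\int_0^\infty e^{-w}\,dw=1$, and for each linear term the key identity is
\[
\int_0^\infty F_{G_i}\big(\sigma_i w/x_i\big)\,e^{-w}\,dw=P\big(G_i\le \sigma_iW/x_i\big)=P(X_i\ge x_i)=1-F_i(x_i)=1-u_i ,
\]
using $X_i=\sigma_iW/G_i$ and continuity of $X_i$. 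Combining the four pieces and simplifying $1-(1-u_1)-(1-u_2)=u_1+u_2-1$ yields exactly the stated integral formula on $(0,1)^2$.

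Finally I would dispatch the boundary cases. The relations $C(u_1,1)=u_1$ and $C(1,u_2)=u_2$ follow from Sklar's Theorem by letting $x_2\to\infty$, respectively $x_1\to\infty$, in $H$. For the ``otherwise'' case, $u_1=0$ or $u_2=0$ forces $F_i^{-1}(0)=0$ and hence $H(0,\cdot)=H(\cdot,0)=0$ because $X_i>0$ a.s., so $C$ vanishes there; this is also consistent with the integral formula in the limit, since $u_1\downarrow0$ sends $(1-u_1)^{-1/\alpha_1}-1\downarrow0$, so $F_{G_1}(\cdot)\uparrow1$ and, by monotone convergence and the identity above, $C\to 0+u_2-1+(1-u_2)=0$. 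The one place calling for care is precisely this boundary: the written integrand divides by $(1-u_i)^{-1/\alpha_i}-1$, which vanishes exactly when $u_i=0$, explaining the separate cases; away from there the integrand is bounded by $e^{-w}$, so convergence is automatic. I expect the only genuinely substantive step to be recognising the marginal identity for the linear terms — it is what produces the $u_1+u_2-1$ — with everything else being routine bookkeeping.
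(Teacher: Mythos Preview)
Your proposal is correct and follows essentially the same route as the paper: condition on $W$, use independence to factor the conditional joint probability as a product of Gamma tail probabilities, expand the product, identify the linear terms with $1-F_{X_i}(x_i)=1-u_i$, substitute the marginal quantiles, and handle the boundary via Sklar's theorem. Your write-up is, if anything, slightly more careful than the paper's about justifying the interchange of integration and the boundary limits.
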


\begin{proof}
Let $X_1$ and $X_2$ be defined by equations \ref{eq:X1} and \ref{eq:X2} and let $H(x_1,x_2)$ be their joint distribution function, then by conditioning on $W$ we have for $x_1,x_2\in(0,\infty)$
\begin{equation*}
\begin{aligned}
H(x_1,x_2)
&=\Pr(X_1\leq x_1, X_2\leq x_2)\\
&=\int^\infty_0\Pr(X_1\leq x_1,X_2\leq x_2|W=w)e^{-w}dw\\
&=\int^\infty_0\left(1-F_{Y_1^{-1}}\left(\frac{w\sigma_1}{x_1}\right)\right)\left(1-F_{Y_2^{-1}}\left(\frac{w\sigma_2}{x_2}\right)\right)e^{-w}dw
\end{aligned} 
\end{equation*}
where the last line follows from the independence of $W$, $Y_1 $ and $Y_2$. 
It is straightforward to calculate that for $x\in(0,\infty)$
$$
F_{X_i}(x_i)=1-\int^\infty_0F_{Y_i^{-1}}\left(\frac{w\sigma_i}{x_i}\right)e^{-w}dw = 1 - \left(1+\frac{x_i}{\sigma_i}\right)^{-\alpha_i} $$
and hence
$$ x_i=\sigma_i\left(\left(1-F_{X_i}(x_i)\right)^{-1/\alpha_i}-1\right).
$$
Thus,
\begin{equation*}
\begin{aligned}
&H(x_1,x_2)=F_{X_1}(x_1)+F_{X_2}(x_2)-1\\
&+\int^\infty_0F_{Y_1^{-1}}\left(\frac{w}{\left(\left(1-F_{X_1}(x_1)\right)^{-\frac{1}{\alpha_1}}-1\right)}\right)F_{Y_2^{-1}}\left(\frac{w}{\left(\left(1-F_{X_2}(x_2)\right)^{-\frac{1}{\alpha_2}}-1\right)}\right)e^{-w}dw.
\end{aligned}
\end{equation*}
For $x_1=\infty$ or $x_2=\infty$, we have $F_{X_i}(\infty)=1$ for $i=1,2$ and
\begin{equation*}
H(\infty,x_2)=P(X_1\leq\infty,X_2\leq x_2)=P(X_2\leq x_2)=F_{X_2}(x_2),\quad  \forall x_2\in\overline{\mathbb{R}}
\end{equation*}
$$H(x_1,\infty)=F_{X_1}(x_1), \quad \forall x_1\in\overline{\mathbb{R}}.$$
If $x_1\leq0$ and $x_2<\infty$, 
\begin{equation*}
F_{X_1}(x_1)=0\text{ and }H(x_1,x_2)=P(X_1\leq x_1,X_2\leq x_2)=0,\quad \forall x_2\in\overline{\mathbb{R}}.
\end{equation*}
Similarly, if $x_1<\infty$ and $x_2\leq0$, then 
$F_{X_2}(x_2)=0\text{ and }H(x_1,x_2)=0$ for all $x_1\in\overline{\mathbb{R}}.$
With $C(u_1,u_2)$ as  in the statement of the theorem, and with the parameters $\alpha_1$ and $\alpha_2$ from the marginal distributions of $X_1$ and $X_2$ respectively,
\begin{equation}\label{disCop}
H(x_1,x_2)=C(F_{X_1}(x_1),F_{X_2}(x_2))\text{ for all }(x_1,x_2)\in\overline{\mathbb{R}}^2.
\end{equation}
By Sklar's theorem (Theorem \ref{thm:sklar}), we see that as $F_{X_i}$ is continuous with $RanF_{X_i}=[0,1]$, $C$ is the uniquely determined function on $[0,1]^2$ such that Equation \eqref{disCop} holds, the function $C$ is a copula and further, $C$ is the copula of the Two-component model random variables defined in  (\ref{eq:X1}) and (\ref{eq:X2}).
\end{proof}

Before we discuss the properties of this copula there are three points to mention. Firstly, the only method we know to estimate $\alpha_1$ and $\alpha_2$ is to assume the dataset has Pareto Type II marginal distributions, and to fit these margins. Hence, the use of this copula is limited to when this assumption holds and further, this method increases the error in the GoF test for the copula. Secondly, the copula requires $\alpha_1, \alpha_2>0$. Lastly, even though the Two-component model variables, $X_1$ and $X_2$, are dependent on the parameters $\sigma_1$ and $\sigma_2$, these parameters do not feature in the copula, hence these parameters do not need to be estimated when fitting the copula. \\

\subsection{Properties of the two-component copula}
To limit the notation we first look at the copula $C_{-X_1,-X_2}$, which we denote by the function $Z$, and use this function to find the behaviour of the Two-component copula using the equations below, which were inferred from  \eqref{thm:copTransformations}. Let
\begin{equation}
\begin{aligned}
Z(u,v)
:&=C_{-X_1,-X_2}(u,v)=u+v-1+C_{X_1,X_2}(1-u,1-v)\\
\text{and }z(u,v)&={\partial^2\over\partial u\partial v}Z(u,v)= c_{X_1,X_2}(1-u,1-v),\label{eq:w}
\end{aligned}
\end{equation}
where $c_{X_1,X_2}(u,v):={\partial^2\over\partial u\partial v}C_{-X_1,-X_2}(u,v)$. So for $(u,v)\in(0,1)^2$
\begin{equation*}
Z(u,v)=\int^\infty_0F_{G_1}\left(\frac{w}{\left(u^{-\frac{1}{\alpha_1}}-1\right)}\right)F_{G_2}\left(\frac{w}{\left(v^{-\frac{1}{\alpha_2}}-1\right)}\right)e^{-w}dw.
\end{equation*}
Direct verification shows that
\begin{equation}  \label{thm:densityZ}
\begin{aligned}
z(u,v)
&=\frac{u^{-(1/\alpha_1+1)}v^{-(1/\alpha_2+1)}(u^{-1/\alpha_1}-1)^{\alpha_2}(v^{-1/\alpha_2}-1)^{\alpha_1}}{(\alpha_1+\alpha_2+1)B(\alpha_1+1,\alpha_2+1)(u^{-1/\alpha_1}v^{-1/\alpha_2}-1)^{(\alpha_1+\alpha_2+1)}},
\end{aligned}
\end{equation}
where $f_{G_i}$ is the probability density function (p.d.f.) of $G_i\sim\Gamma(\alpha_i,1)$ and $B(\cdot,\cdot)$ is the Beta function.
Using (\ref{thm:densityZ}) and (\ref{eq:w}) it is straight-forward to calculate that if $c_{X_1,X_2}$ is as defined in \ref{eq:w}, then $c_{X_1,X_2}:(0,1)^2\rightarrow\mathbb{R}$ such that  
\begin{equation}\label{eq:tcdensity}
\begin{aligned}
c&_{X_1,X_2}(u,v)=\\
&\frac{(1-u)^{-(\frac{1}{\alpha_1}+1)}(1-v)^{-(\frac{1}{\alpha_2}+1)}((1-u)^{-\frac{1}{\alpha_1}}-1)^{\alpha_2}((1-v)^{-\frac{1}{\alpha_2}}-1)^{\alpha_1}}{(\alpha_1+\alpha_2+1)B(\alpha_1+1,\alpha_2+1)((1-u)^{-\frac{1}{\alpha_1}}(1-v)^{-\frac{1}{\alpha_2}}-1)^{(\alpha_1+\alpha_2+1)}},
\end{aligned}
\end{equation}
where $f_{G_i}$ is the probability density function of $G_i\sim\Gamma(\alpha_i,1)$ and $B(\cdot,\cdot)$ is the Beta function.
Now using Sklar's theorem, on $(0,1)^2$ we know that $c_{X_1,X_2}$ is equal to the joint density function of two uniform random variables which have copula $C_{X_1,X_2}$.


\begin{remark}
Let $C_{X_1,X_2}$ and $c_{X_1,X_2}$ be as defined in Theorem \ref{thm:tccopula} and (\ref{eq:tcdensity}). If $\alpha_1=1$ and $\alpha_2=1$ then it is straighforward to verify that 
$$
C_{X_1,X_2}(u,v)=\frac{uv}{u+v-uv},
 \mbox{ and } 
c_{X_1,X_2}(u,v)=\frac{2uv}{(u+v-uv)^3}.
$$
\end{remark}

 Figures \ref{fig:twoCompDensity1} and \ref{fig:twoCompDensity2} show  $c_{X_1,X_2}$ for five different values of $(\alpha_1,\alpha_2)$ namely; $(0.5,0.7), (1,1), (1,2)$, $(30,35)$ and $(1,35)$. There are two plots for each pair; one plot showing the whole $c_{X_1,X_2}$ graph and the other just showing the part of the $c_{X_1,X_2}$ graph which falls in the unit cube. The plots illustrate  that the 
 more similar $\alpha_1$ and $\alpha_2$ are, the more symmetric the copula; this can be confirmed by looking at Theorem \ref{thm:tccopula}. Figure \ref{fig:ca=1b=35less1} shows that if $\alpha_2$ is larger than $\alpha_1$ then in the unit cube the density increases  more sharply for points where $v>u$ for $v\rightarrow u$ than for points where $u>v$ for $u\rightarrow v$. Comparing the rest of the right-hand figures shows that for larger values of  both $\alpha_1$ and $\alpha_2$ the density in the unit cube rises more steeply on both sides of the line $u=v$. These observations are evidence that the increase in $c_{X_1,X_2}$ as the line $u=v$ is approached is affected by both $\alpha_1$ and $\alpha_2$, with steepness increasing on both sides of the line as $\alpha_1$, $\alpha_2$ or both increase.

Additionally, looking at all the plots it is seen that $c_{X_1,X_2}$ increases as we approach the the line $u=v$. Looking at Equation (\ref{sklar}) this shows that $X_1$ and $X_2$ are more likely to take values where $F_{X_1}\approx F_{X_2}$.

Lastly, the figures show that for low $\alpha_1$ and $\alpha_2$ $(\alpha_i\leq2)$ the density clearly differs on the line $u=v$, with events with {\it u} and {\it v} being  low/high being more likely than events with {\it u} and {\it v} close to 0.5. This is not seen when $(\alpha_1,\alpha_2)=(30,35)$ where the density is more evenly spread on the line $u=v$, with all the values in this region having a higher density when compared to the rest of the plane.

\begin{figure}[H]
        \centering
        \begin{subfigure}[b]{0.35\textwidth}
                \centering
                \includegraphics[width=\textwidth]{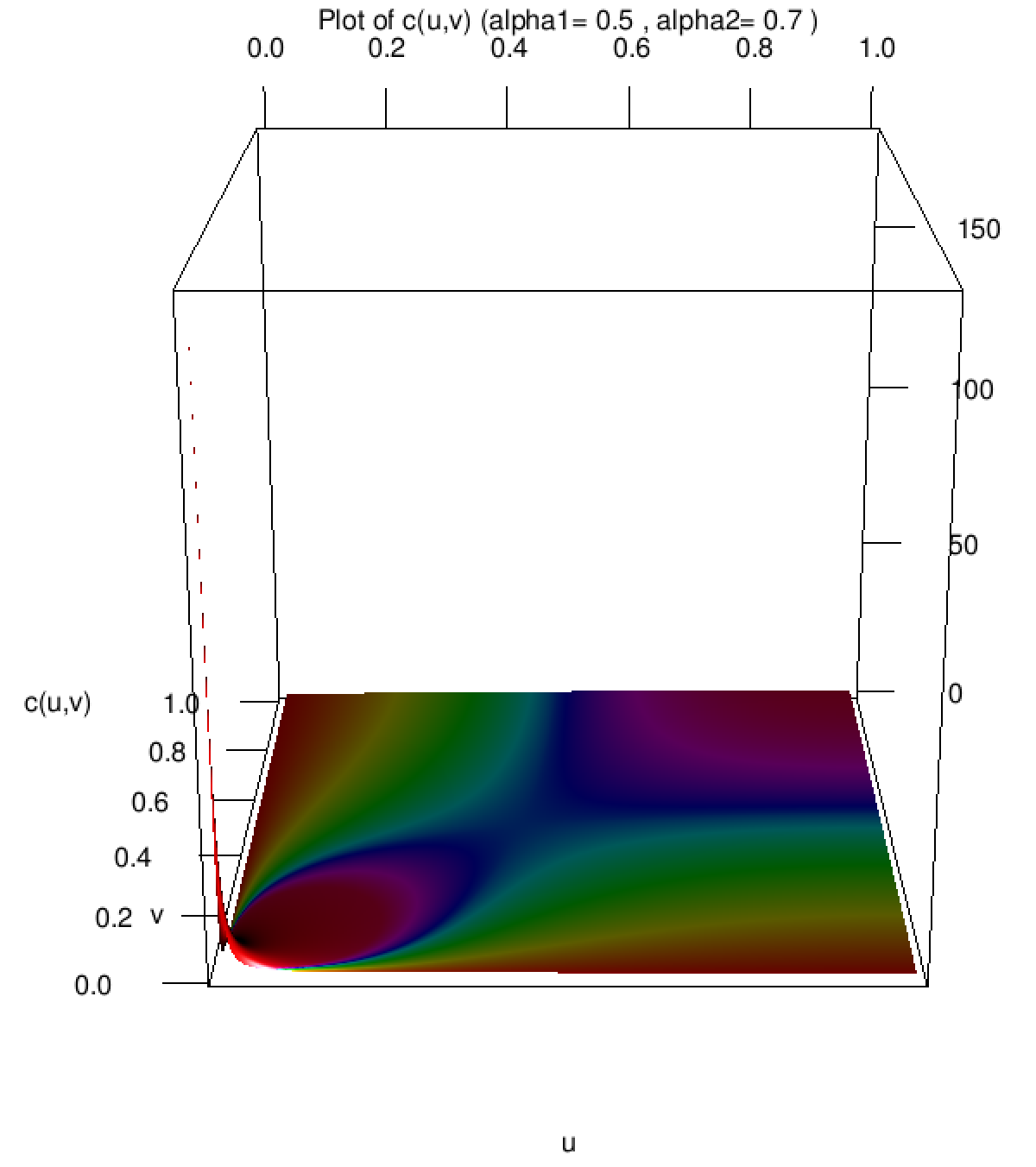}
                \caption{$(\alpha_1,\alpha_2)=(0.5,0.7)$.}
                \label{fig:ca=0.5b=0.7}
        \end{subfigure}%
        ~ 
        \begin{subfigure}[b]{0.35\textwidth}
                \centering
                \includegraphics[width=\textwidth]{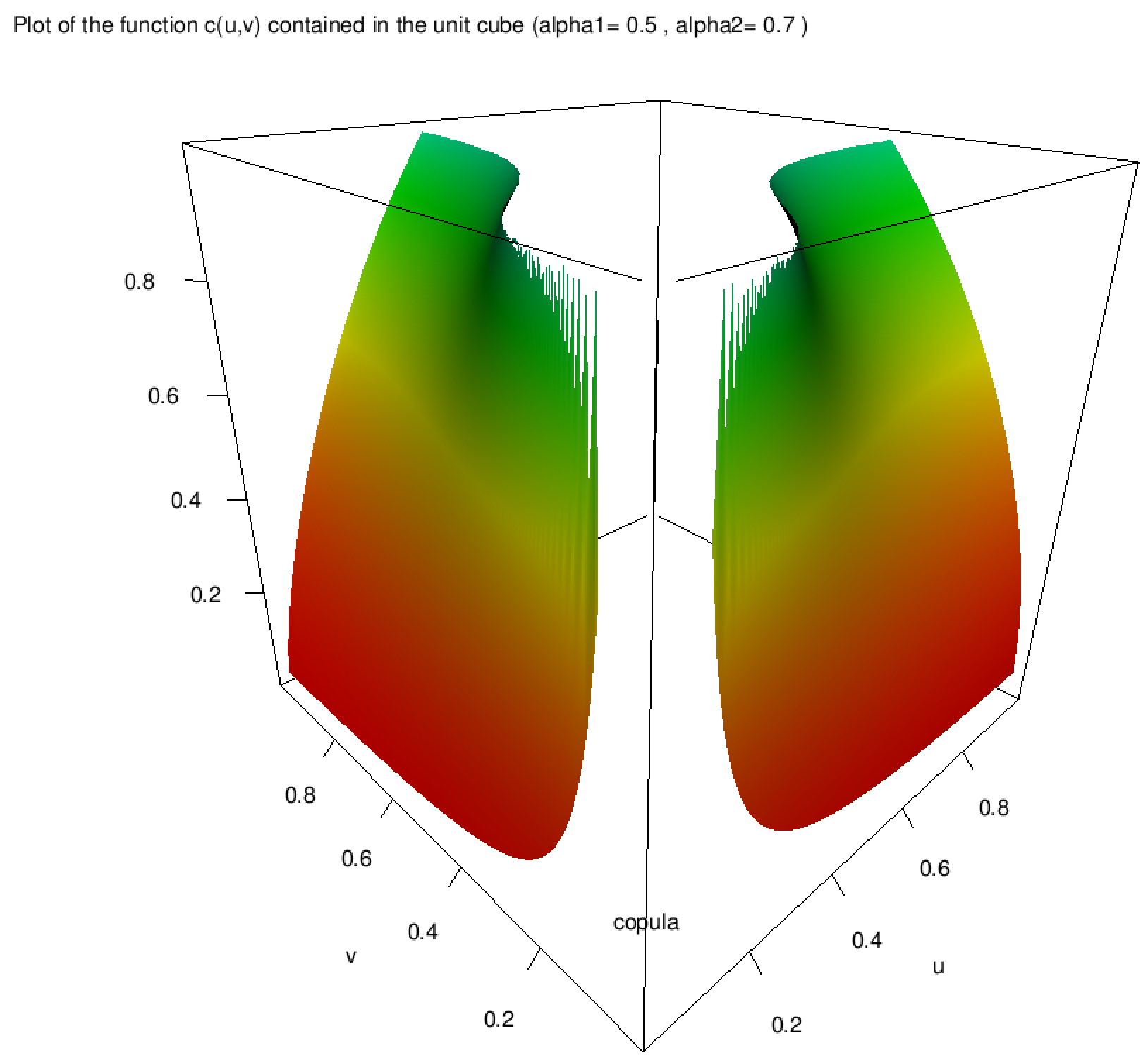}
                \caption{$(\alpha_1,\alpha_2)=(0.5,0.7)$.}
                \label{fig:ca=0.5b=0.7less1}
        \end{subfigure}
        
        \begin{subfigure}[b]{0.35\textwidth}
                \centering
                \includegraphics[width=\textwidth]{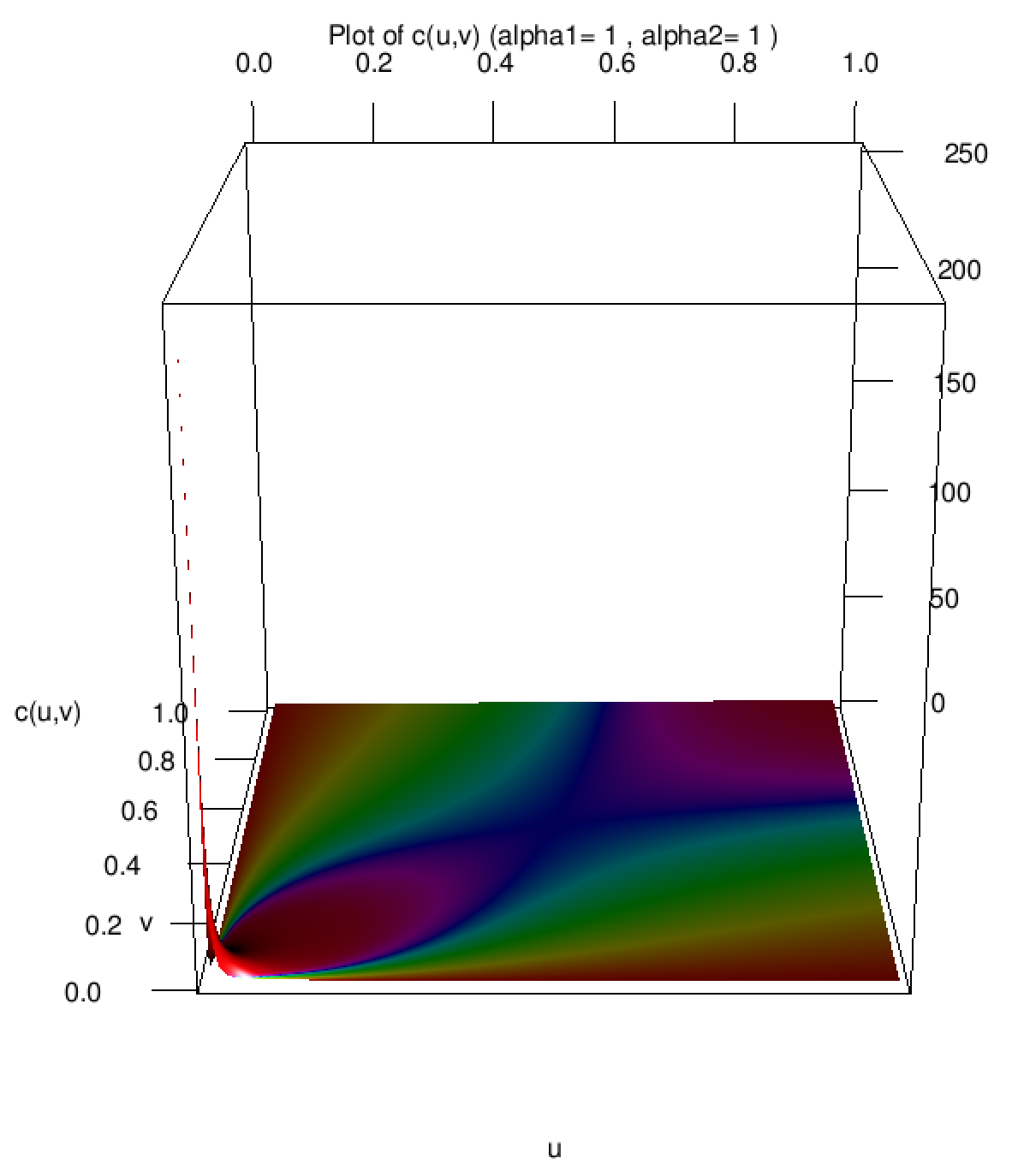}
                \caption{$(\alpha_1,\alpha_2)=(1,1)$.}
                \label{fig:ca=1b=1}
        \end{subfigure}%
        ~ 
        \begin{subfigure}[b]{0.35\textwidth}
                \centering
                \includegraphics[width=\textwidth]{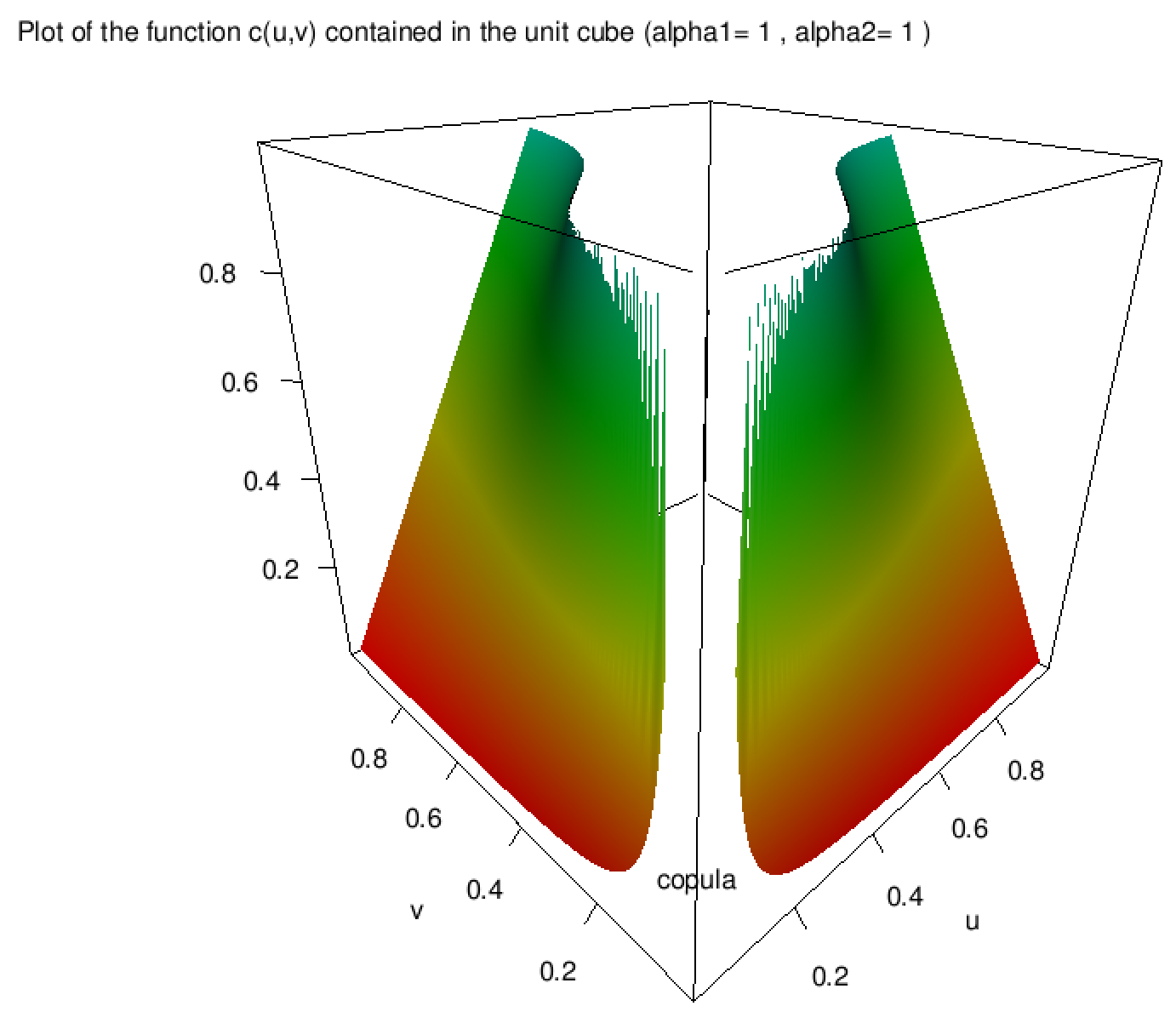}
                \caption{$(\alpha_1,\alpha_2)=(1,1)$.}
                \label{fig:ca=1b=1less1}
        \end{subfigure}
        
        \begin{subfigure}[b]{0.35\textwidth}
                \centering
                \includegraphics[width=\textwidth]{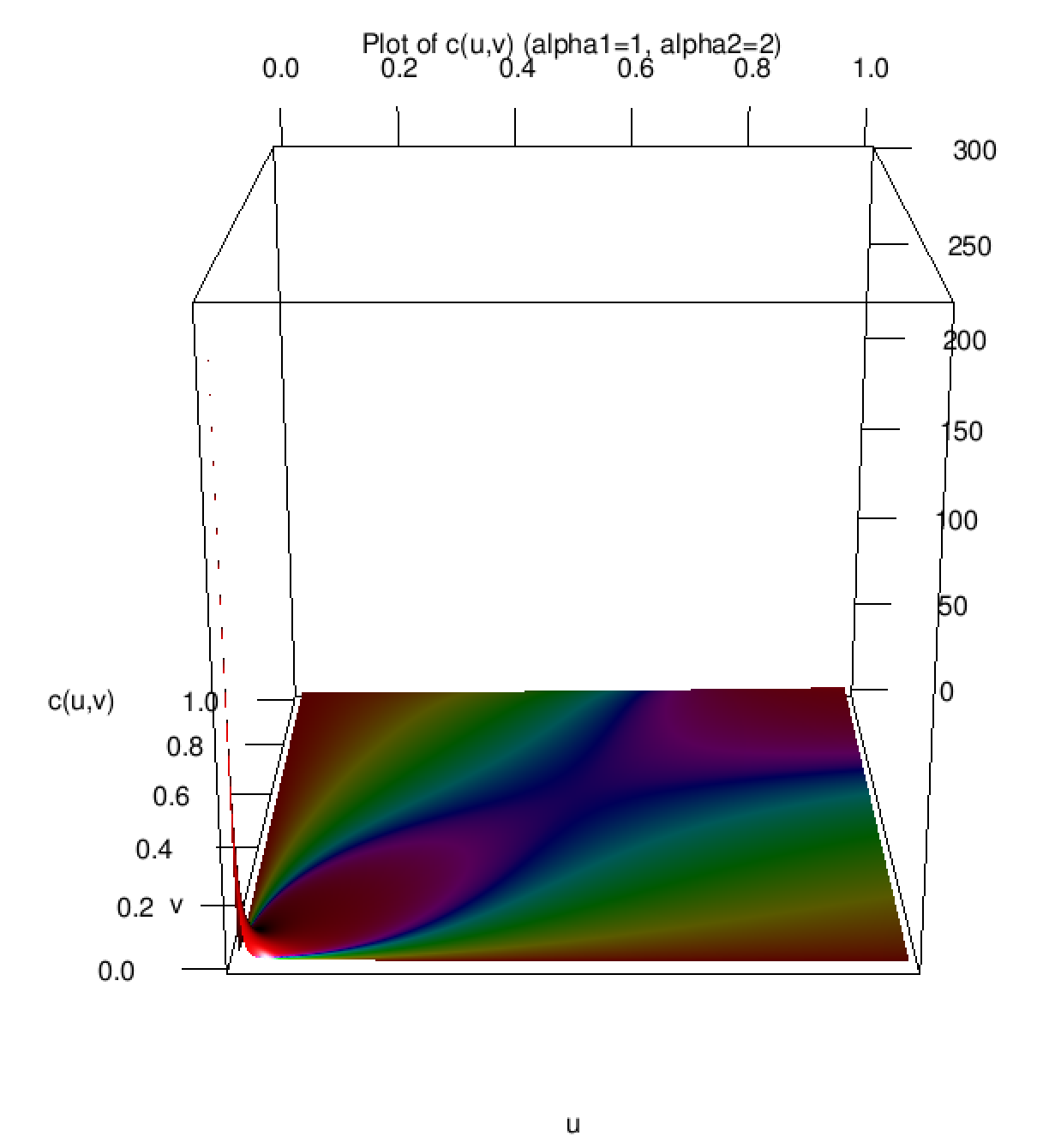}
                \caption{$(\alpha_1,\alpha_2)=(1,2)$.}
                \label{fig:ca=1b=2}
        \end{subfigure}%
        ~ 
        \begin{subfigure}[b]{0.35\textwidth}
                \centering
                \includegraphics[width=\textwidth]{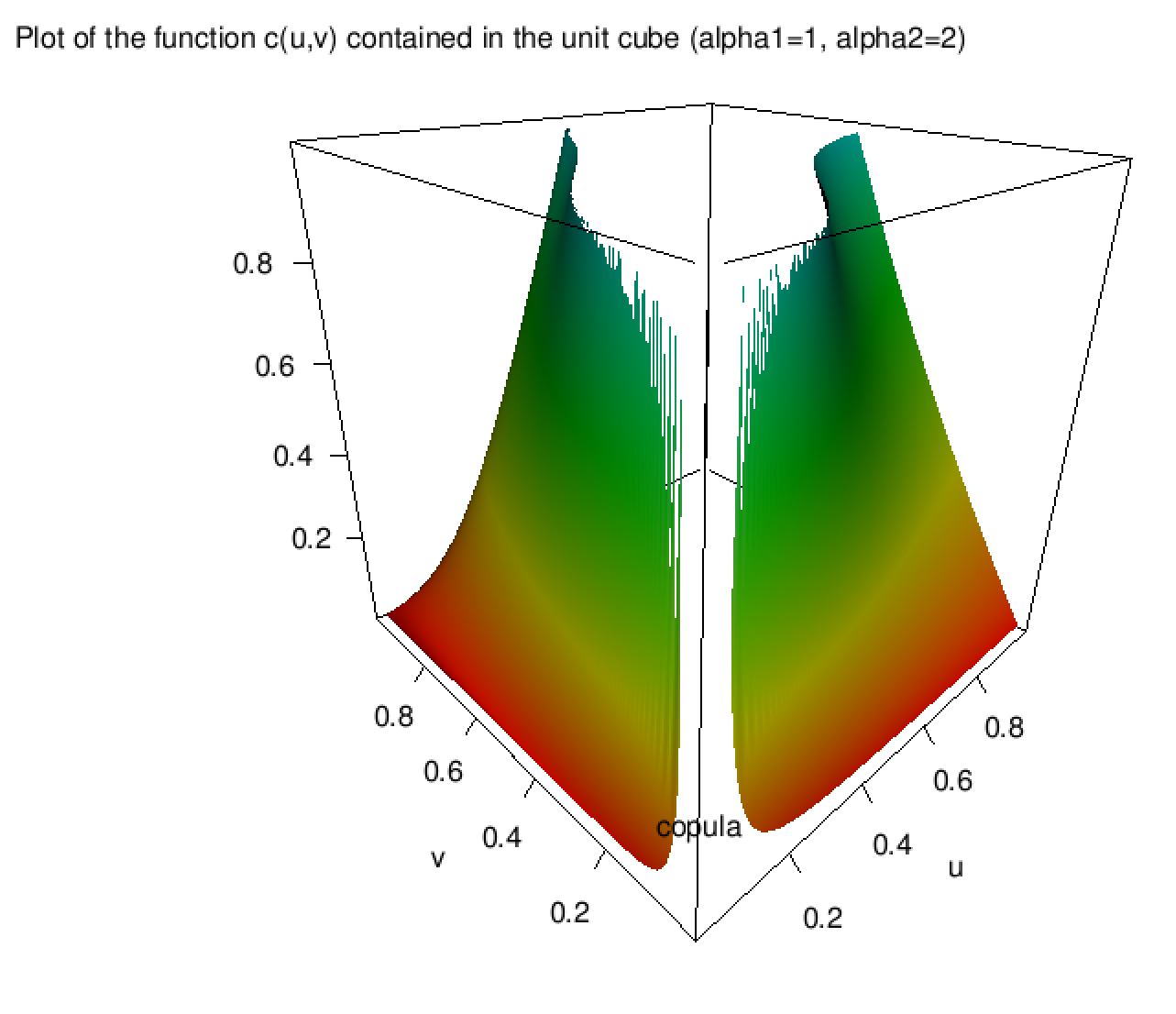}
                \caption{$(\alpha_1,\alpha_2)=(1,2)$.}
                \label{fig:ca=1b=2less1}
        \end{subfigure}
         \caption{Plots of $c_{X_1,X_2}(u,v)$ for different $\alpha_1$ and $\alpha_2$ values. Right plots show the whole $c_{X_1,X_2}(u,v)$ graph, left plots show the part of the $c_{X_1,X_2}(u,v)$ which falls in the unit cube. See Appendix \ref{appendix:colourPalette} for the colour key.}
        \label{fig:twoCompDensity1}
\end{figure}

\begin{figure}[H]
        \centering
        \begin{subfigure}[b]{0.35\textwidth}
                \centering
                \includegraphics[width=\textwidth]{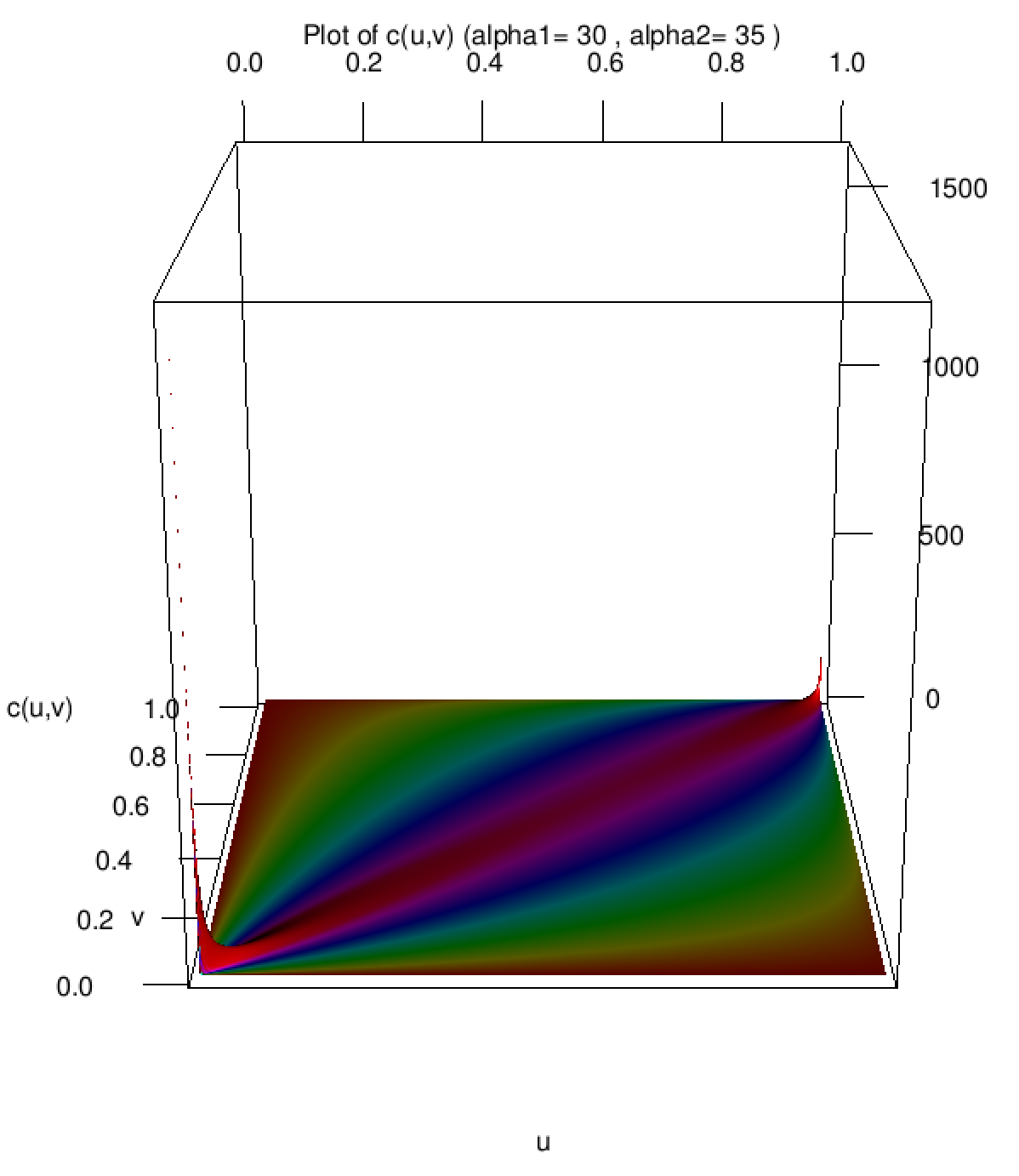}
                \caption{$(\alpha_1,\alpha_2)=(30,35)$.}
                \label{fig:ca=30b=35}
        \end{subfigure}%
        ~ 
        \begin{subfigure}[b]{0.35\textwidth}
                \centering
                \includegraphics[width=\textwidth]{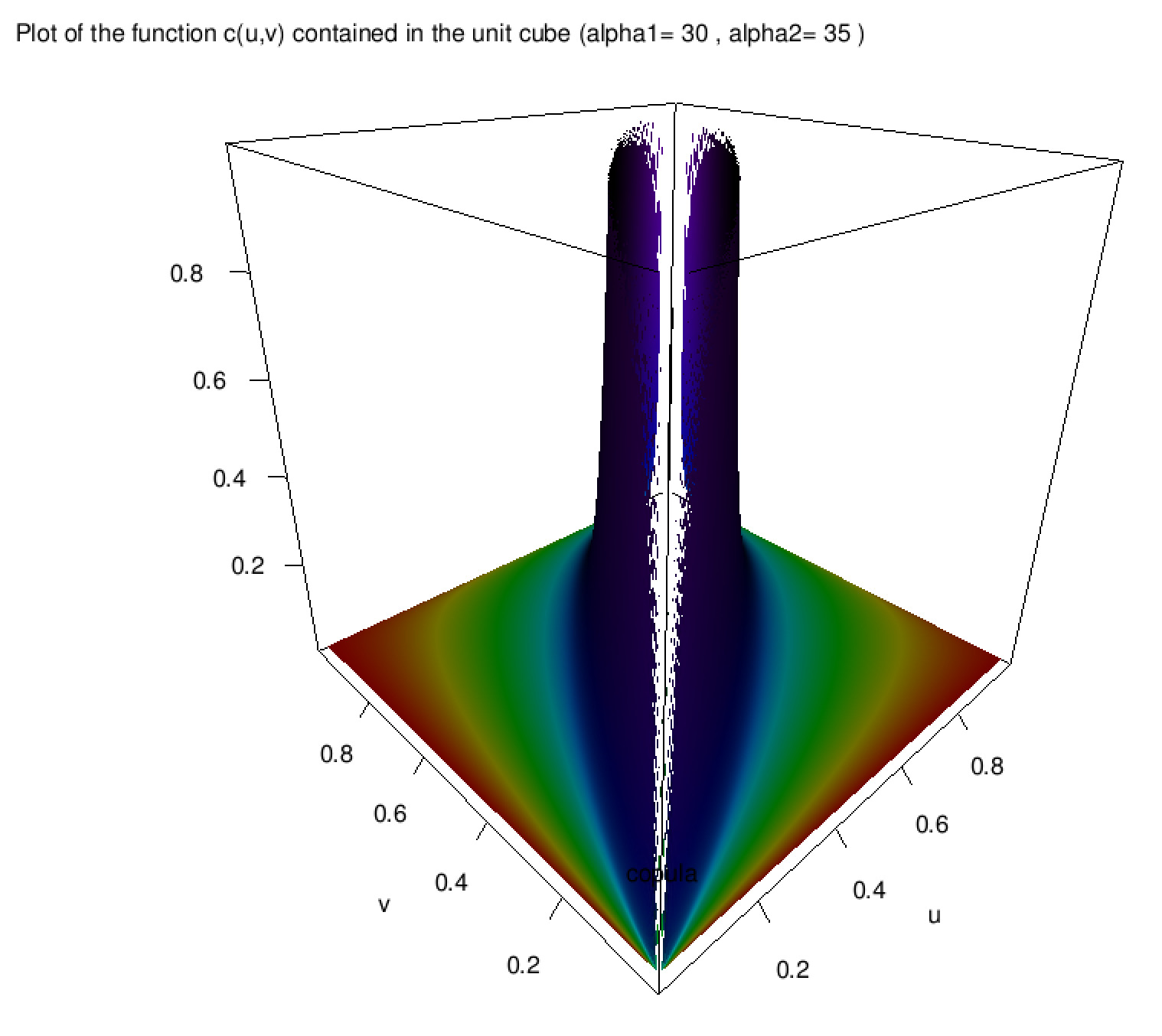}
                \caption{$(\alpha_1,\alpha_2)=(30,35)$.}
                \label{fig:ca=30b=35less1}
        \end{subfigure}
        
        \begin{subfigure}[b]{0.35\textwidth}
                \centering
                \includegraphics[width=\textwidth]{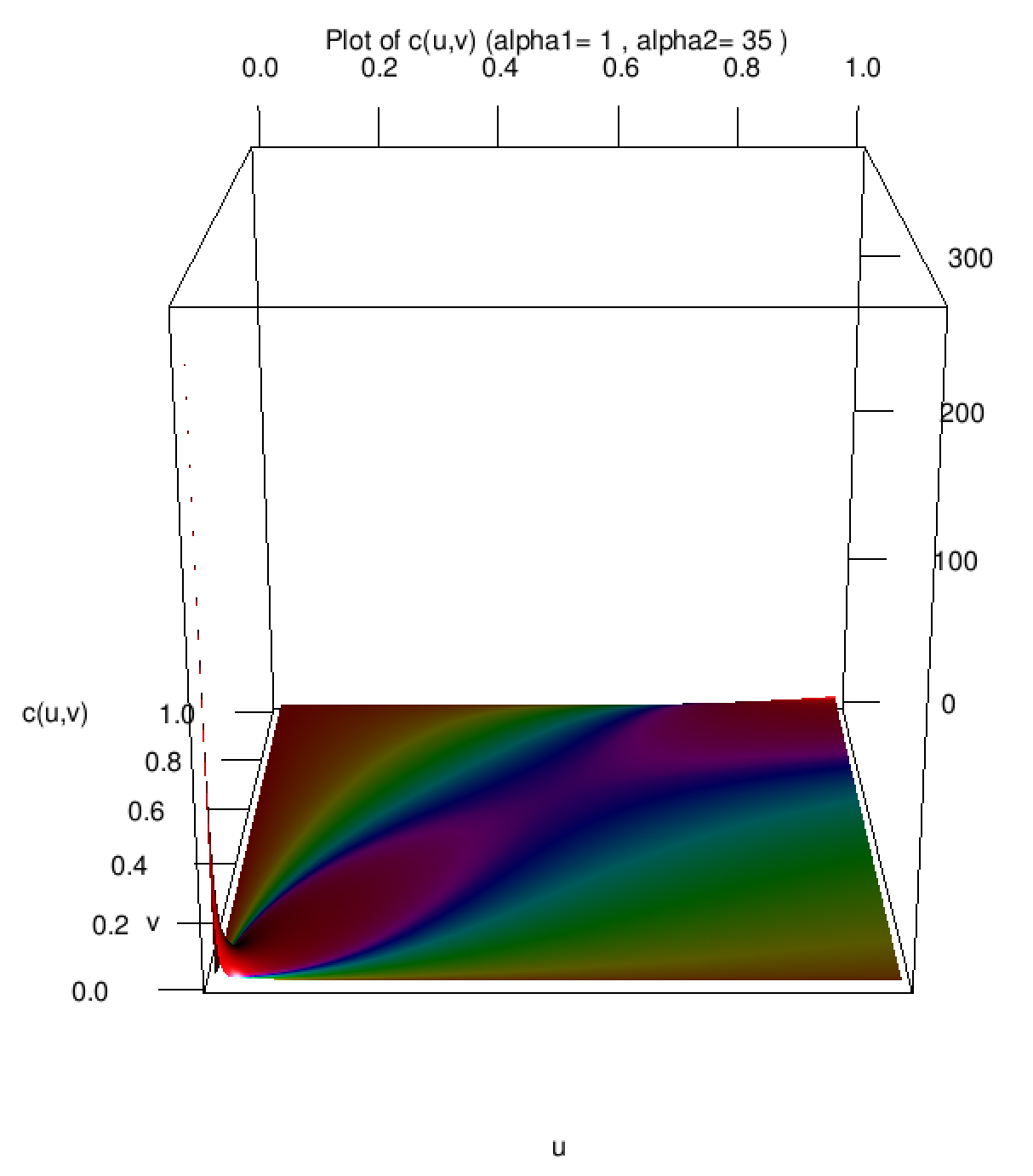}
                \caption{$(\alpha_1,\alpha_2)=(1,35)$.}
                \label{fig:ca=1b=35}
        \end{subfigure}%
        ~ 
        \begin{subfigure}[b]{0.35\textwidth}
                \centering
                \includegraphics[width=\textwidth]{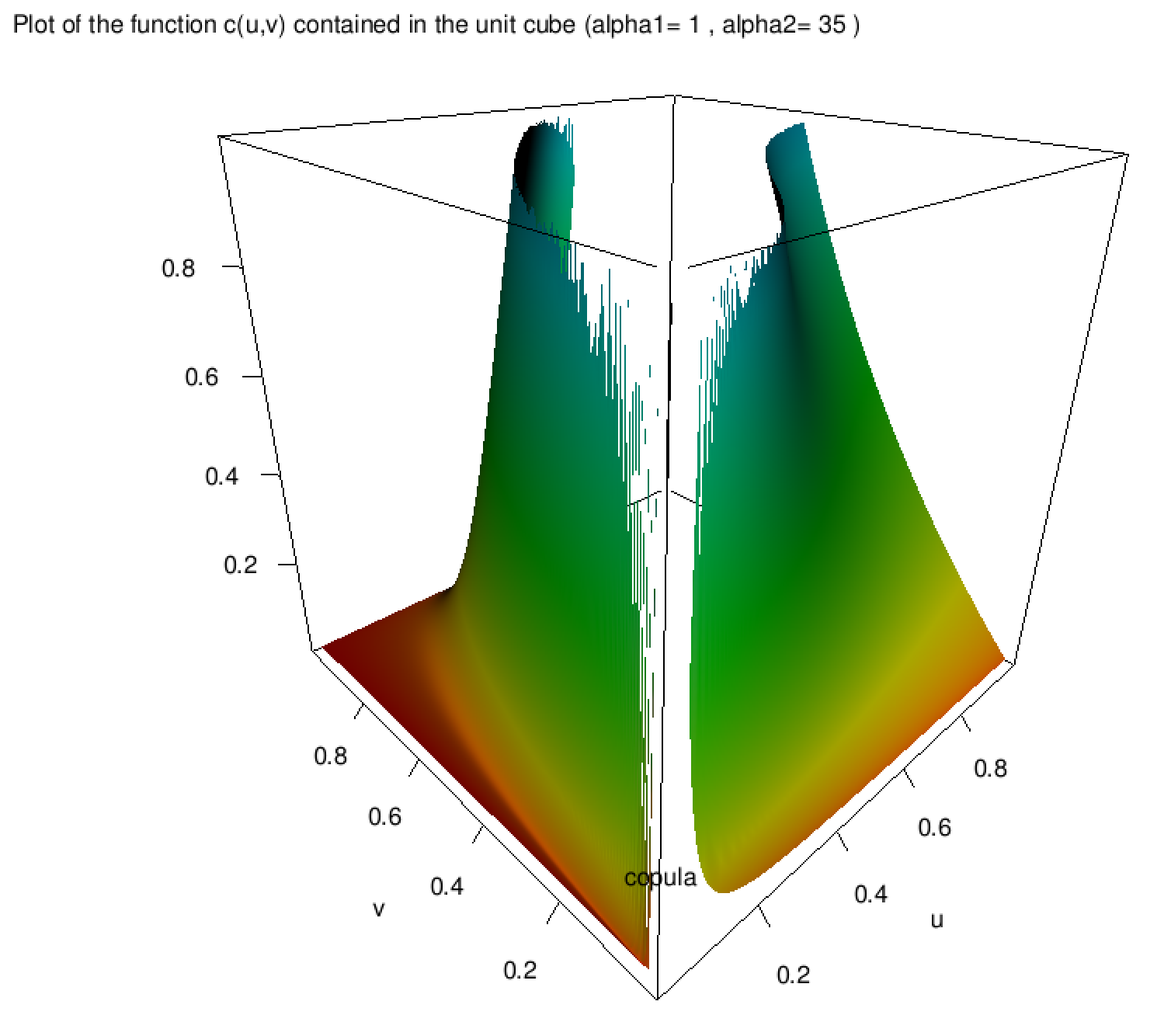}
                \caption{$(\alpha_1,\alpha_2)=(1,35)$.}
                \label{fig:ca=1b=35less1}
        \end{subfigure}        
        \caption{Plots of $c_{X_1,X_2}(u,v)$ for different $\alpha_1$ and $\alpha_2$ values. Right plots show the whole $c_{X_1,X_2}(u,v)$ graph, left plots show the part of the $c_{X_1,X_2}(u,v)$ which falls in the unit cube. See Appendix \ref{appendix:colourPalette} for the colour key.}
        \label{fig:twoCompDensity2}
\end{figure}

\subsection{Upper tail dependency}
Direct verification shows that the upper tail dependence of $C_{X_1,X_2}$ (as defined in Theorem \ref{thm:tccopula})  is
\begin{equation}\label{eq:uppertailtc}
\begin{aligned}
\lambda_U=&\lim_{t\downarrow0}\left(\frac{t^{(\frac{1}{\alpha_1}-1)}}{\alpha_1}\int^\infty_0f_{G_1}(wt^{\frac{1}{\alpha_1}})F_{G_2}(wt^{\frac{1}{\alpha_2}})we^{-w}dw\right.\\
&\left.+\frac{t^{(\frac{1}{\alpha_2}-1)}}{\alpha_2}\int^\infty_0F_{G_1}(wt^{\frac{1}{\alpha_1}})f_{G_2}(wt^{\frac{1}{\alpha_2}})we^{-w}dw\right).
\end{aligned}
\end{equation}
In particular,  if $\alpha_1, \alpha_2<1$ or if $\alpha_1=\alpha_2=1$, we have $\lambda_U=0$.

A simple expression for $\lambda_U$ when $\alpha_1, \alpha_2>1$ was not found, so it is estimated  by plotting the function $\lambda_U(t)$ (Equation (\ref{eq:uppertailtc}) without the limit),  for the given $\alpha_1$ and $\alpha_2$, in a range close to 0 and looking at the behaviour of the graph. These plots will not be mentioned again when the estimated $\lambda_U$ values are given, but they can be found in Appendix \ref{appendix:lambdaplots}.

\subsection{Comparisons to other copulas}
Tthe Two-component copula is not a Gaussian or Archimedean copula as it does not have the symmetry property if $G_1$ and $G_2$ have different $\alpha$ parameters. Further, this copula is not an Extreme-value copula as it does not fit the form given in \ref{def:extremValueC}.

However, while our copula is new to our knowledge, it has a resemblance to the Clayton copula. The Clayton copula is an Archimedean copula with generator $\varphi(t)=\left(\frac{t^{-\theta}-1}{\theta}\right)$, where $\theta>0$ to enforce $\varphi(0)=\infty$, , see \cite{schmidt}. 

\begin{proposition}
If $U$ and $V$ are generated by
$$
U=\left(1+\frac{W_1}{S}\right)^{\frac{1}{\theta}},\quad V=\left(1+\frac{W_2}{S}\right)^{\frac{1}{\theta}},
$$
where $W_1, W_2$ and $S$ are independent with $W_i\sim Exp(1)$ and $S\sim\Gamma(\frac{1}{\theta},1)$, then U and V have a Clayton copula with parameter $\theta$.
\end{proposition}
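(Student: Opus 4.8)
\noindent The plan is to recognise $(U,V)$ as the output of the classical Marshall--Olkin (frailty) construction of an Archimedean copula whose inverse generator is a Laplace transform, and then to confirm the identification by a direct conditioning computation. Write $T_i:=W_i/S$ for $i=1,2$, so that $U=g(T_1)$ and $V=g(T_2)$ with $g(t)=(1+t)^{-1/\theta}$; since $\theta>0$ this $g$ is continuous and strictly \emph{decreasing} on $(0,\infty)$, so by \eqref{thm:copTransformations} the copula of $(U,V)$ is the survival copula of the copula of $(T_1,T_2)$, and it suffices to identify the latter. (I note in passing that, for the conclusion exactly as stated, the exponent should read $-1/\theta$ rather than $+1/\theta$: with $+1/\theta$ the map $g$ is increasing, \eqref{thm:copTransformations2} applies instead, and one ends at the survival (reflected) Clayton copula, not the Clayton copula.)

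I would first compute the marginal law of a single $T_i$. Conditioning on $S=s$ and using $W_i\sim Exp(1)$ gives $\Pr(T_i>t\mid S=s)=\Pr(W_i>st\mid S=s)=e^{-st}$, hence, by the Laplace transform of the $\Gamma(1/\theta,1)$ distribution,
\begin{equation*}
F_{T_i}(t)=1-\mathbb{E}[e^{-tS}]=1-(1+t)^{-1/\theta},\qquad t>0,
\end{equation*}
which is continuous and strictly increasing, with inverse relation $1+t=(1-F_{T_i}(t))^{-\theta}$. Conditioning on $S$ again, using the conditional independence of $W_1$ and $W_2$ given $S$, and expanding by linearity,
\begin{equation*}
\Pr(T_1\le t_1,\,T_2\le t_2)=1-(1+t_1)^{-1/\theta}-(1+t_2)^{-1/\theta}+(1+t_1+t_2)^{-1/\theta}.
\end{equation*}

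Substituting $u_i=F_{T_i}(t_i)$, i.e. $1+t_i=(1-u_i)^{-\theta}$ and hence $1+t_1+t_2=(1-u_1)^{-\theta}+(1-u_2)^{-\theta}-1$, into the joint distribution function and invoking Sklar's theorem (Theorem \ref{thm:sklar}), the copula of $(T_1,T_2)$ is
\begin{equation*}
C_{T_1,T_2}(u_1,u_2)=u_1+u_2-1+\left((1-u_1)^{-\theta}+(1-u_2)^{-\theta}-1\right)^{-1/\theta},
\end{equation*}
which is precisely the survival copula $u_1+u_2-1+C^{Cl}_\theta(1-u_1,1-u_2)$ of the Clayton copula $C^{Cl}_\theta(u,v)=(u^{-\theta}+v^{-\theta}-1)^{-1/\theta}$. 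Since the survival operation is an involution, applying \eqref{thm:copTransformations} a second time to pass from $(T_1,T_2)$ to $(U,V)=(g(T_1),g(T_2))$ returns $C^{Cl}_\theta$ itself; the analogous one-dimensional computation shows $U$ and $V$ are each uniform on $[0,1]$, so the statement holds in the strict Sklar sense.

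I do not expect a substantive obstacle here: the argument is a short computation resting only on the Gamma Laplace transform and the transformation rules \eqref{thm:copTransformations} and \eqref{thm:copTransformations2}. The two points that require care are (i) the direction of the monotone map $g$, hence which of \eqref{thm:copTransformations} and \eqref{thm:copTransformations2} applies, and (ii) the appearance of $(1-F)^{-\theta}$, rather than $F^{-\theta}$, when the marginal is inverted — this is exactly what turns ``survival Clayton'' into ``Clayton'' and fixes the sign of the exponent. As a final check I would verify directly that $u_1+u_2-1+C_{T_1,T_2}(1-u_1,1-u_2)$ collapses to $(u_1^{-\theta}+u_2^{-\theta}-1)^{-1/\theta}$.
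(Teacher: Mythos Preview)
Your argument is correct and takes a genuinely different route from the paper. The paper's proof is essentially a one-line reduction: it quotes from \cite{schmidt} the fact that $\bigl(1-\ln(R_i)/S\bigr)^{1/\theta}$, with $R_i\sim U(0,1)$ independent of $S\sim\Gamma(1/\theta,1)$, already has a Clayton copula, and then simply observes that $-\ln(R_i)\sim Exp(1)$, so one may write $W_i$ in place of $-\ln(R_i)$. By contrast, you give a self-contained derivation from first principles: condition on $S$, read off the marginals and the joint survival function via the Gamma Laplace transform, invert through Sklar's theorem, and recognise the survival Clayton form; then use \eqref{thm:copTransformations} to pass through the strictly decreasing map $t\mapsto(1+t)^{-1/\theta}$ and land on $C^{Cl}_\theta$. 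What your approach buys is independence from the external citation and, more interestingly, it exposes the sign of the exponent: your computation shows that $(T_1,T_2)=(W_1/S,W_2/S)$ has the \emph{survival} Clayton copula, so the literal statement with exponent $+1/\theta$ (a strictly increasing map, hence \eqref{thm:copTransformations2}) yields the survival Clayton, while the Clayton copula itself requires $-1/\theta$ as you note. The paper's proof, being just a citation plus a distributional identity, does not see this distinction and inherits whatever sign convention the source uses.
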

\begin{proof}
\cite{schmidt} gives that if $R_1, R_2$ and $S$ are independent with $R_i\sim U(0,1)$ and $S\sim\Gamma(\frac{1}{\theta},1)$, then 
$$
U=\left(1-\frac{ln(R_1)}{S}\right)^{\frac{1}{\theta}},\quad V=\left(1-\frac{ln(R_2)}{S}\right)^{\frac{1}{\theta}},
$$
 have a Clayton copula with parameter $\theta$.
Now, 
\begin{equation*}
\begin{aligned}
P(-ln(R_i)\leq r) &=P(R_i\geq e^{-r})=\left\{ 
  \begin{array}{l l}
    0 & \quad \text{if $r<0$}\\
    1-e^{-r} & \quad \text{if $0\leq r \leq \infty$}
  \end{array} \right.
\end{aligned}
\end{equation*}
Hence $(-ln(R_1))$ and $(-ln(R_1))$ are i.i.d. Exp(1) as required.
\end{proof}

\begin{proposition}
If U and V are given by 
$$
U=\left(1+\frac{W}{G_1}\right)^{\alpha_1},\quad V=\left(1+\frac{W}{G_2}\right)^{\alpha_2},
$$
where $W, G_1$ and $G_2$ are independent with $W\sim Exp(1)$ and $G_i\sim\Gamma(\alpha_i,1)$, then U and V have the {\it Two-component model copula} which is given in Theorem \ref{thm:tccopula}, with parameters $\alpha_1$ and $\alpha_2$.
\end{proposition}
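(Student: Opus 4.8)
The plan is to avoid any direct computation of the joint law of $(U,V)$ and instead recognize the pair as strictly increasing transformations of the two-component model variables $X_1,X_2$, so that the invariance of copulas under such transformations, Equation \eqref{thm:copTransformations2}, does all the work.

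First I would instantiate the two-component model. Fix any $\sigma_1,\sigma_2>0$ (for definiteness $\sigma_1=\sigma_2=1$, which is harmless since the copula does not depend on the $\sigma_i$), set $Y_i=1/G_i$ so that $Y_i^{-1}=G_i\sim\Gamma(\alpha_i,1)$, and define $X_1=\sigma_1 W Y_1$ and $X_2=\sigma_2 W Y_2$. Since $W,G_1,G_2$ are independent, so are $W,Y_1,Y_2$, and hence $(W,Y_1,Y_2)$ satisfies exactly the hypotheses of the Two-component model summary. Consequently $X_i\sim P(II)(0,\sigma_i,\alpha_i)$ and, by Theorem \ref{thm:tccopula}, the copula $C_{X_1,X_2}$ is precisely the Two-component model copula with parameters $\alpha_1,\alpha_2$.

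Next I would express $U$ and $V$ through $X_1$ and $X_2$. Because $X_i=\sigma_i W/G_i$, we have $1+W/G_i=1+X_i/\sigma_i$, so that $U=g_1(X_1)$ and $V=g_2(X_2)$ with $g_i(x)=(1+x/\sigma_i)^{\alpha_i}$. Each $g_i$ is continuous and, since $\alpha_i>0$, strictly increasing on $\mathrm{Ran}\,X_i=(0,\infty)$. Moreover $X_1$ and $X_2$ are continuous random variables, being ratios of independent continuous positive variables (equivalently, the Pareto Type II law is continuous). Thus $U$ and $V$ arise from the continuous random variables $X_1,X_2$ by strictly increasing maps.

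Finally I would invoke Equation \eqref{thm:copTransformations2}: for strictly increasing transformations of continuous random variables the copula is unchanged, so $C_{U,V}=C_{g_1(X_1),g_2(X_2)}=C_{X_1,X_2}$, which is the Two-component model copula of Theorem \ref{thm:tccopula}, as claimed. I do not anticipate a real obstacle; the only points requiring care are the bijective identification $G_i=Y_i^{-1}$ that aligns the shape parameters, the strict monotonicity of $g_i$ (ensured by $\alpha_i>0$), and the continuity of $X_i$ needed to apply \eqref{thm:copTransformations2}. A brute-force alternative would be to compute the law of $(U,V)$ directly by conditioning on $W$, exactly as in the proof of Theorem \ref{thm:tccopula}, but the transformation argument makes this superfluous.
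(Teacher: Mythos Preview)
Your proof is correct and follows essentially the same strategy as the paper: identify $(U,V)$ as strictly increasing transformations of the two-component model variables $(X_1,X_2)$ and invoke the invariance property \eqref{thm:copTransformations2}. The paper's version is slightly less direct, passing through an intermediate pair $U_1=1-(1+X_1/\sigma_1)^{-\alpha_1}$, $V_1=1-(1+X_2/\sigma_2)^{-\alpha_2}$ and then setting $U=1/(1-U_1)$, $V=1/(1-V_1)$, whereas you collapse these two monotone maps into the single transformation $g_i(x)=(1+x/\sigma_i)^{\alpha_i}$.
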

\begin{proof}
Define $X_1$ and $X_2$ as in equations (\ref{eq:X1}) and (\ref{eq:X2}), then $X_1$ and $X_2$ have a Two-component copula with parameters  $\alpha_1$ and $\alpha_2$. Define 
$$
U_1=1-\left(1+\frac{X_1}{\sigma_1}\right)^{-\alpha_1},\quad V_1=1-\left(1+\frac{X_2}{\sigma_2}\right)^{-\alpha_2}.
$$
Then in distribution, 
$$
U_1=1-\left(1+\frac{W}{G_1}\right)^{-\alpha_1},\quad V_1=1-\left(1+\frac{W}{G_2}\right)^{-\alpha_2}, 
$$
where $W$, $G_1$ and $G_2$ are independent with $W\sim Exp(1)$ and $G_i\sim\Gamma(\alpha_i,1)$. Further, $U_1$ and $V_1$ take values on [0,1) as $X_1, X_2>0$. Now, as $U_1$ and $V_1$ are strictly increasing transformations of $X_1$ and $X_2$, $U_1$ and $V_1$ have a Two-component copula with parameters $\alpha_1$ and $\alpha_2$ by \eqref{thm:copTransformations2}. Define $U=1/(1-U_1)$ and  $V=1/(1-V_1)$, then again, by \eqref{thm:copTransformations2}, $U$ and $V$ are strictly increasing transformations of $U_1$, $V_1$, so $U$ and $V$ also have a Two-component copula with parameters $\alpha_1$ and $\alpha_2$.
\end{proof}
Hence in the Two-component copula the Exponential distribution is fixed and the Gamma distribution and parameter $\alpha$ varies between $U$ and $V$, while in the Clayton copula the Exponential distribution varies and the Gamma distribution and parameter $\theta$ are fixed between $U$ and $V$.

\subsection{Goodness-of-fit testing on simulated data}
Tosee how well our goodness-of-fit tests preform on data simulated from the Two-component model, we  first simulate 1000 i.i.d. observations of $(X_1,X_2)$ from (\ref{eq:X1}) and (\ref{eq:X2}). We assume that loss function 1 has the larger propensity for loss, and normalise both loss function's scale parameters according to loss functions 1's scale parameter; hence we choose  $\sigma_1 =1$.  For simplicity we assume loss function 2's propensity for loss is in the ratio of 9:10 when compared to loss function 1; hence $\sigma_2 = 0.9$. As copulas are invariant under monotone increasing transformations of random variables, this simplification does not affect any of our copula fits or GoF tests. Moreover  from our assumptions we have $\xi_1=\frac{1}{\alpha_1}$ lies in $(0,1]$. As we do not have any other assumptions regarding this parameter we pick its value uniformly in the interval $(0,1]$; for the same reason we pick  $\xi_2=\frac{1}{\alpha_2}$ uniformly in the interval $(0,1]$ as well.

 Then we fit the best Gaussian, Gumbel and Two-component copulas to the data and perform a goodness-of-fit test in each case, as well as the more general goodness-of-fit test, which tests whether the data comes from an Extreme-value copula, as discussed in Section \ref{chap:background}. For the Two-component copula the parameters are estimated by the maximum likelihood estimated shape parameters of the marginal GPDs of $X_1$ and $X_2$. We then apply the tests from Section \ref{chap:background}. 
 As we carry out $m(\leq4)$ GoF tests we apply the generalised Benjamini-Hochberg procedure (Theorem $1.3$ \cite{pap:BH}): for a test at level $\beta$ we reject the $ith$ hypothesis ($H_0^i$) if $p_i<(\beta/\sum_{j=1}^m\frac{1}{j})$.

Table \ref{tab:SimulatedResults} shows that the Gaussian, Gumbel and Extreme-value copula tests provided statistically significant $p$-values $(0.05/\sum_{j=1}^4\frac{1}{j})$ at the 5\% significance level.   
{
\begin{table} [h!]
  \begin{center}
      \begin{tabular}{ l | l } 
       \multicolumn{2}{c} {\bf Fitted model parameters (3 s.f.)} \\ \hline
      Gaussian $(R_{12})$ & 0.645   \\ 
      Gumbel $(\theta)$ & 1.81  \\ 
      Two-component $(\alpha_1,\alpha_2)$ & (3.05,1.18) \\ 
       \multicolumn{2}{c} {\bf Estimated $\lambda_U$ (3 s.f.)} \\ \hline
      Gaussian & 0 \\ 
      Gumbel & 0.533 \\
      Two-component & 0 \\ 
       \multicolumn{2}{c} {\bf $P$-values for GoF (3 s.f.)} \\ \hline
      Gaussian & 0 \\ 
      Gumbel & 0  \\ 
      Two-component ($p$-value, $\#$valid iterations) & (0.689,1000) \\ 
      Extreme-value copula & $1.12e^{-31}$  \\ 
      \end{tabular}
      \caption[Caption for LOF]{Results table for fitting copulas to data simulated from the  Two-component model with $\alpha_1 =3.387732  $ and 
      $\alpha_2= 1.181292$,  $\sigma_1=1, \sigma_2=0.9$. The $p$-values are given before the generalised Benjamini-Hochberg procedure.\label{tab:SimulatedResults}}
  \end{center}
\end{table}
}

There is evidence to reject the hypotheses that the dataset has a Gaussian, Gumbel or Extreme-value copula. Since the data did come from a Two-component copula, this result suggests that the Two-component copula is very different to both the Gaussian and Gumbel copulas and illustrates that  is not an Extreme-value copula. This result for the Gaussian and Gumbel copula is emphasised in Figure \ref{fig: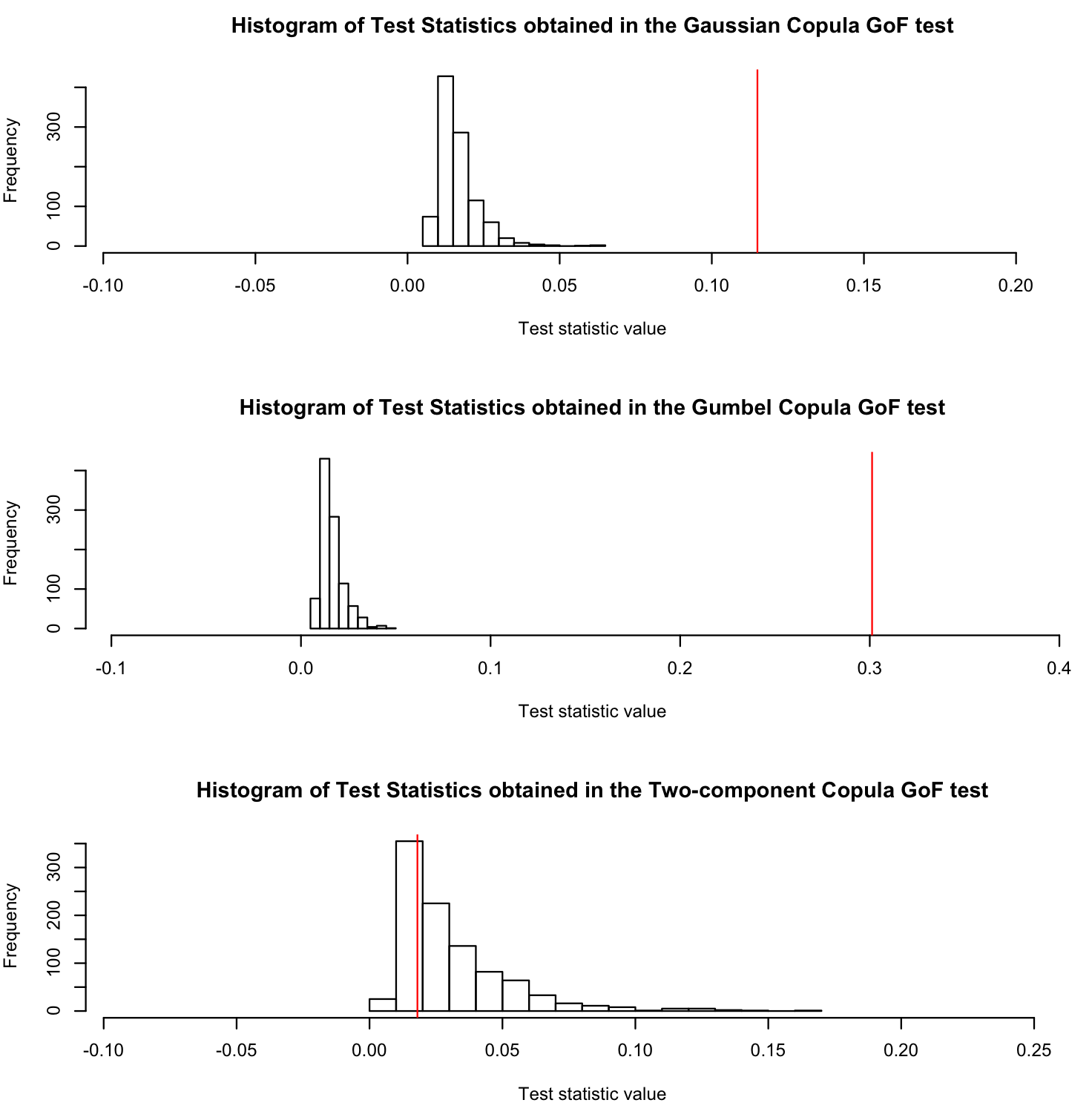} in  Appendix A, which shows that the observed test statistics for the Gaussian and Gumbel tests fell in the extreme tail of the bootstrap simulated distribution of the test statistic under the respected null hypotheses (see Step 5 in Appendix \ref{appendix:bootstrap} for the simulation method). The Two-component copula goodness-of-fit test did not provide significant results and the estimated Two-component copula parameters were close to the real values. This is reassuring as we know the dataset is indeed generated from a Two-component copula.

Figure \ref{fig:Tcfittedcopulas} shows that the Two-component copula was the best fitting copula of the simulated Two-component model data, as it has the most overlap with the empirical copula while the other copulas show a bias (over-approximate the copula for lower values of $u$ and $v$, while under-approximate the copula for higher values of $u$ and $v$).

\begin{figure}[H]
        \centering
        \begin{subfigure}[b]{0.35\textwidth}
                \centering
                \includegraphics[width=\textwidth]{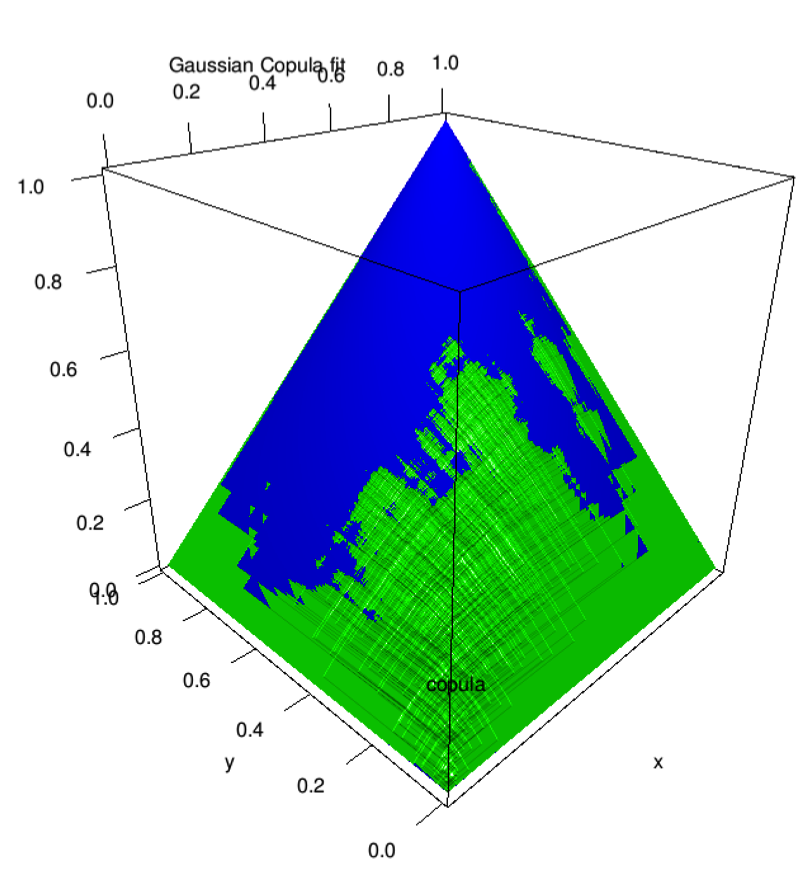}
                \caption{Gaussian Copula.}
                \label{fig:TCGauUp.png}
        \end{subfigure}%
        ~ 
        \begin{subfigure}[b]{0.35\textwidth}
                \centering
                \includegraphics[width=\textwidth]{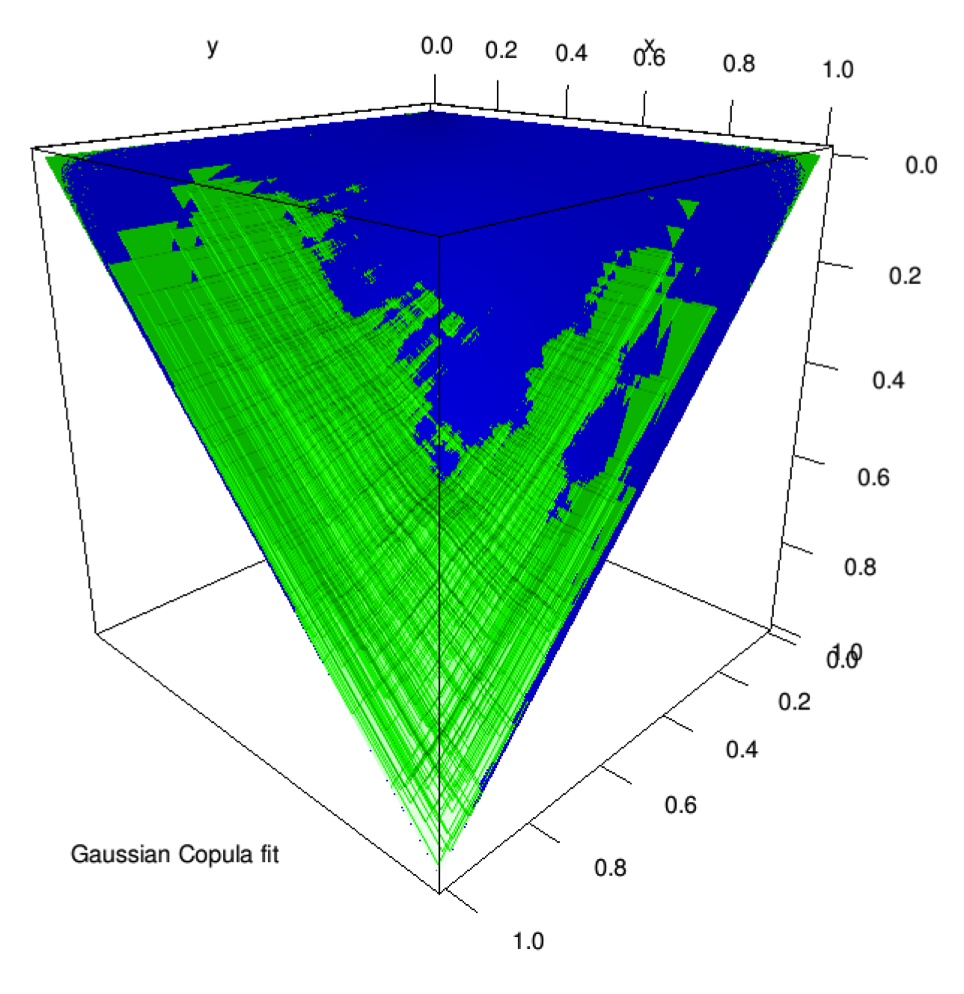}
                \caption{Gaussian Copula.}
                \label{fig:TCGauDown.png}
        \end{subfigure}
        
        \begin{subfigure}[b]{0.35\textwidth}
                \centering
                \includegraphics[width=\textwidth]{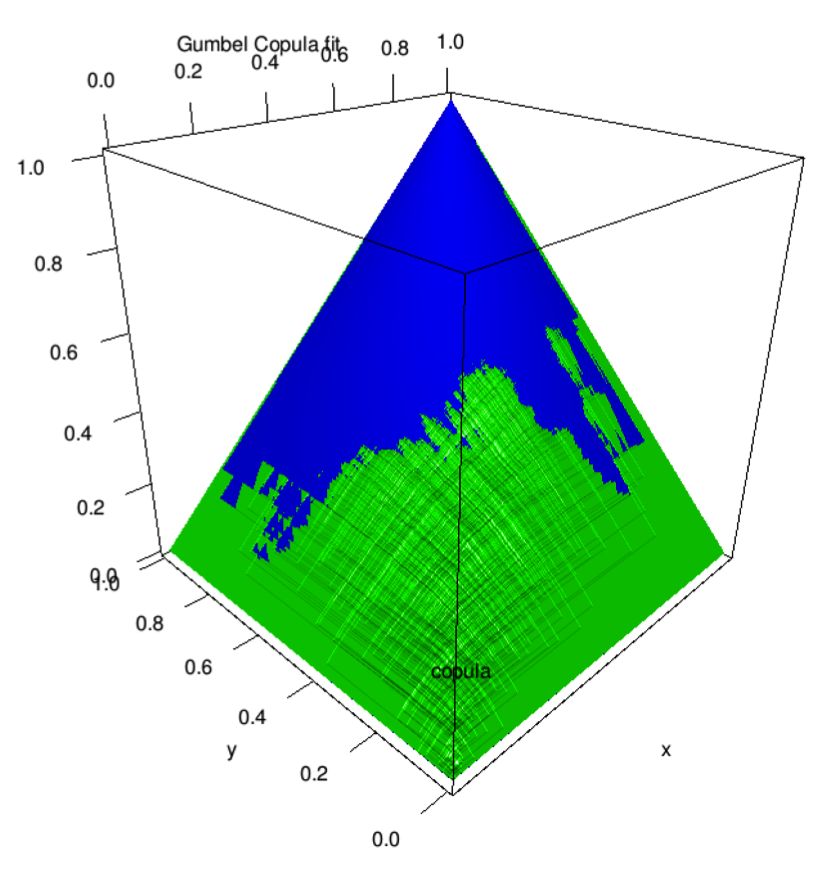}
                \caption{Gumbel Copula.}
                \label{fig:TCGumUp.png}
        \end{subfigure}%
        ~ 
        \begin{subfigure}[b]{0.35\textwidth}
                \centering
                \includegraphics[width=\textwidth]{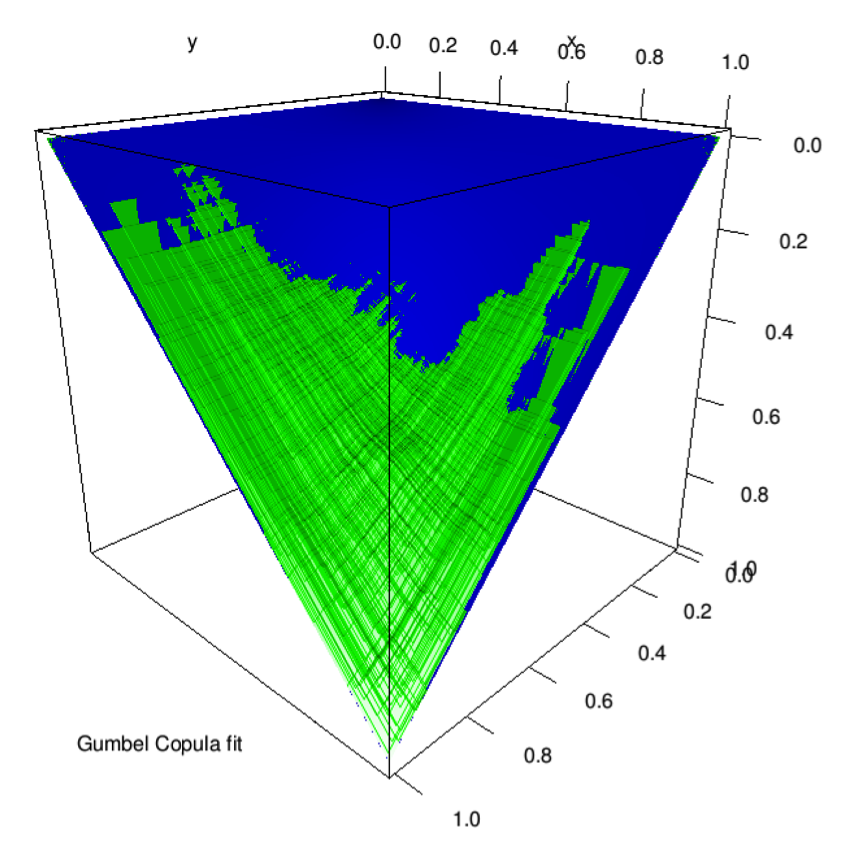}
                \caption{Gumbel Copula.}
                \label{fig:TCGumDown.png}
        \end{subfigure}
        
        \begin{subfigure}[b]{0.35\textwidth}
                \centering
                \includegraphics[width=\textwidth]{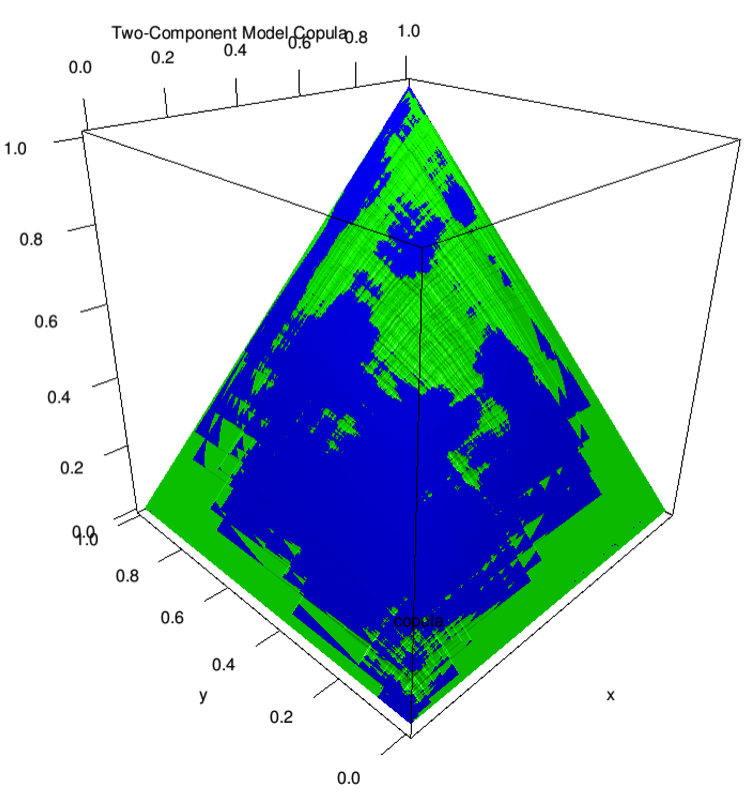}
                \caption{Two-component Copula.}
                \label{fig:TCTcUp.png}
        \end{subfigure}%
        ~ 
        \begin{subfigure}[b]{0.35\textwidth}
                \centering
                \includegraphics[width=\textwidth]{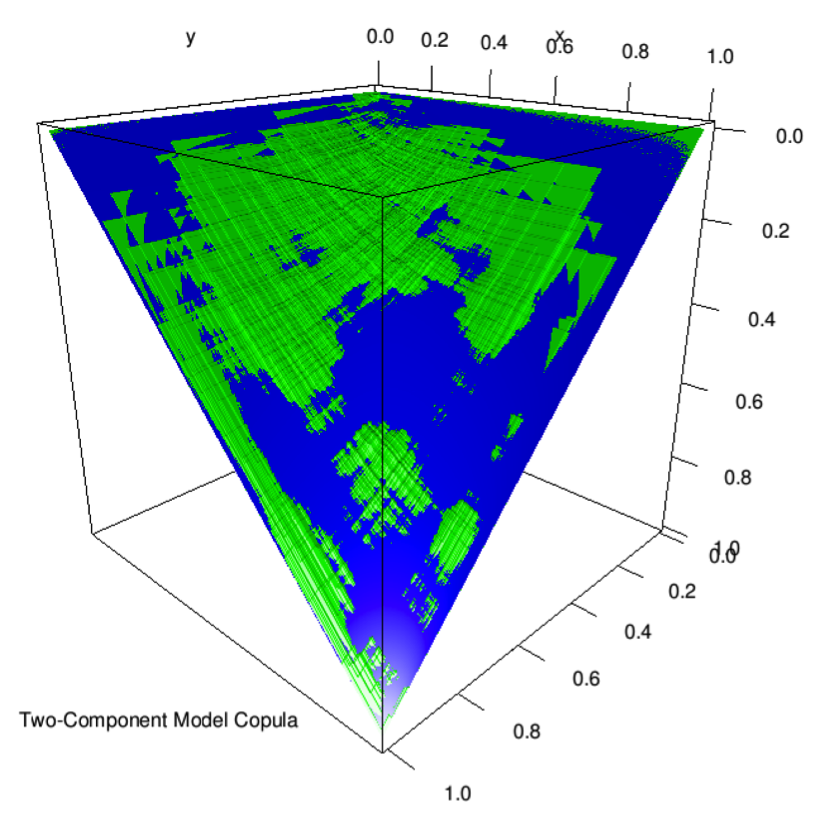}
                \caption{Two-component Copula.}
                \label{fig:TCTcDown.png}
        \end{subfigure}        
        \caption{3-Dimensional plots comparing the Empirical copula (green) with the best fitting parametric copulas (blue) for the simulated data from the Two-component model.The right plots show a view of the graph where the z-axis is increasing upwards,  the left plots show a view of the graph where the z-axis is increasing downwards.}
        \label{fig:Tcfittedcopulas}
\end{figure}

Thus, the Two-component copula is unlike some of the most commonly used copulas in the insurance field to-date. Consequently, it fills a gap in the literature on copulas for insurance applications.

\bibliographystyle{alea3}

\bibliography{refs}

\newpage 
\appendix  

\section{Bootstrap method and test results}\label{appendix:bootstrap}
First  we describe the bootstrap method used to obtain a $p$-value in the copula goodness-of-fit test based on the empirical copula. For reference of this method, see section 3.10 of \cite{Berg}.
\begin{enumerate}
	\item Using Equation (\ref{eq:transformedsample}), generate the transformed sample (${\bf u_1,...,u_n}$) from the sample data (${\bf x_1,...,x_n}$). 
	\item Estimate the parameters of the parametric copula, $\hat{\theta}$, from the transformed sample, and ensure they satisfy any requirements needed to make the parametric copula valid. If not, no valid parametric copula fits the data and the test fails.
	\item Compute the empirical copula, $C_n$, using Equation (\ref{equ:empCopula}).
	\item \label{step:boot2} Estimate the test statistic $\hat{\rho}_{CvM}$ by plugging  $C_n$,  $C_{\hat{\theta}}$ and (${\bf u_1,...,u_n}$) into Equation (\ref{equ:CvM}). 
	\item For some large integer K, repeat the following steps for every $k\in\{1,...,K\}$ (parametric bootstrap):
	\begin{enumerate}[(i)]
		\item Generate a random sample $({\bf x_{1,k}^0,...,x_{n,k}^0})$ from the null hypothesis copula $C_{\hat{\theta}}$ and using (\ref{eq:transformedsample}) calculate the associated transformed sample (${\bf u_{1,k}^0,...,u_{n,k}^0}$).
		\item  Estimate the parameters of the parametric copula, $\hat{\theta}^0$, from the transformed sample (${\bf u_{1,k}^0,...,u_{n,k}^0}$), and ensure they satisfy any requirements needed to make the parametric copula valid. If not pass over this iteration.
		\item\label{step:Boot} Estimate the bootstrap test statistic $\hat{\rho}_{CvM,k}^0$ by plugging  $C_k^0$, $C_{\hat{\theta}^0}$ and (${\bf u_{1,k}^0,...,u_{n,k}^0}$) into Equation (\ref{equ:CvM}),  where $C_k^0$ is the empirical copula of the sample (${\bf u_{1,k}^0,...,u_{n,k}^0}$) using Equation (\ref{equ:empCopula}).
	\end{enumerate}
	\item\label{step:pvalue} Approximate the  $p$-value of the test by $\hat{p}=\frac{1}{V+1}\sum_{k:Valid}{\bf1}_{\hat{\rho}_{CvM,k}^0\geq\hat{\rho}_{CvM}}$, where V is the number of valid iterations and the sum only goes over the valid iterations.
\end{enumerate}

In this study, K is chosen to be 1000.\\

\noindent {\it Note.} Steps \ref{step:boot2} and \ref{step:Boot} only work as there is an analytical expression $C_\theta$ for each of the copulas. If this was not the case then we would carry out the bootstrap method explained in \cite{Berg} to estimate $\hat{\rho}_{CvM}$ and $\hat{\rho}_{CvM,k}^0$ respectively.\\

Next we give more details on the results for the comparison with the Gaussian, Gumbel and extreme-value copula. 
Figure \ref{fig:TcStatisticsHist.png} shows that the observed test statistics for the Gaussian and Gumbel tests fell in the extreme tail of the bootstrap simulated distribution of the test statistic under the respected null hypotheses. It is re-assuring that the simulated values are plausible for a two-component copula, as that is how they were generated. 

\begin{figure}[h!]
\begin{center}
\includegraphics[width=10cm]{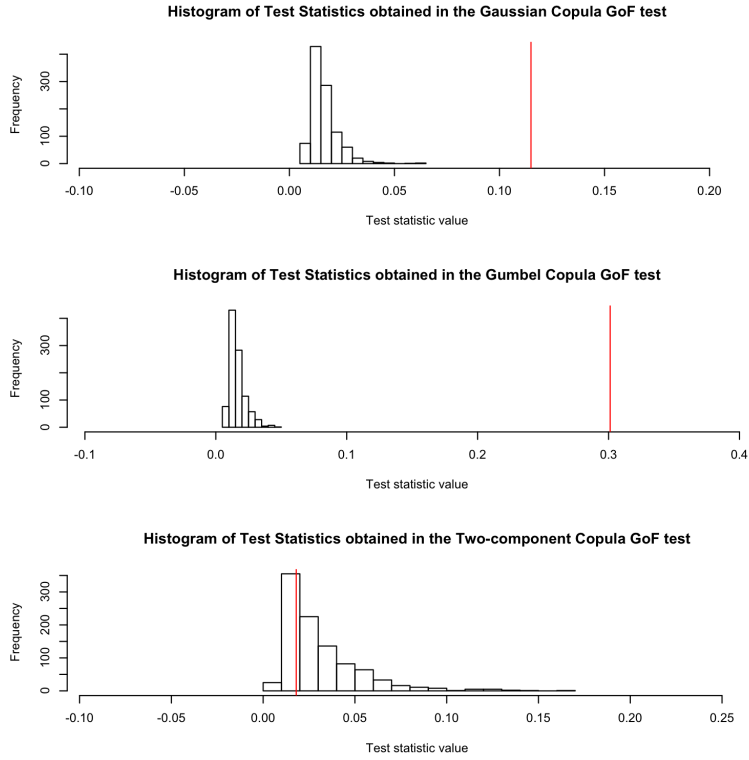}
\caption[Caption for LOF]{A histogram showing the spread of the test statistics in the GoF test for each copula. The red line is the observed test statistic from the simulated data. \label{fig:TcStatisticsHist.png}}
\end{center}
\end{figure}
\section{Plots used to estimate $\lambda_U$ for the Two-component copula} 
This graph was used in Table \ref{tab:SimulatedResults} to estimate the upper tail dependence of a Two-component copula with parameters $\alpha_1=3.387732$ and $\alpha_2=1.181292$. The graph was constructed by plotting equation \ref{eq:uppertailtc} (without the limit) for small $t$. If the graph looked to be convergent close to zero, then the upper-tail dependence of the copula was estimated by picking this convergent value. In this case a value of 0 was picked as it can be seen that the curve monotonically decreases towards 0 as $t$ decreases, and for small $t$ the curve is within 0.001 units of 0.

\label{appendix:lambdaplots}
\begin{figure}[ht]
        \centering
        \begin{subfigure}[b]{0.5\textwidth}
                \centering
                \includegraphics[width=\textwidth]{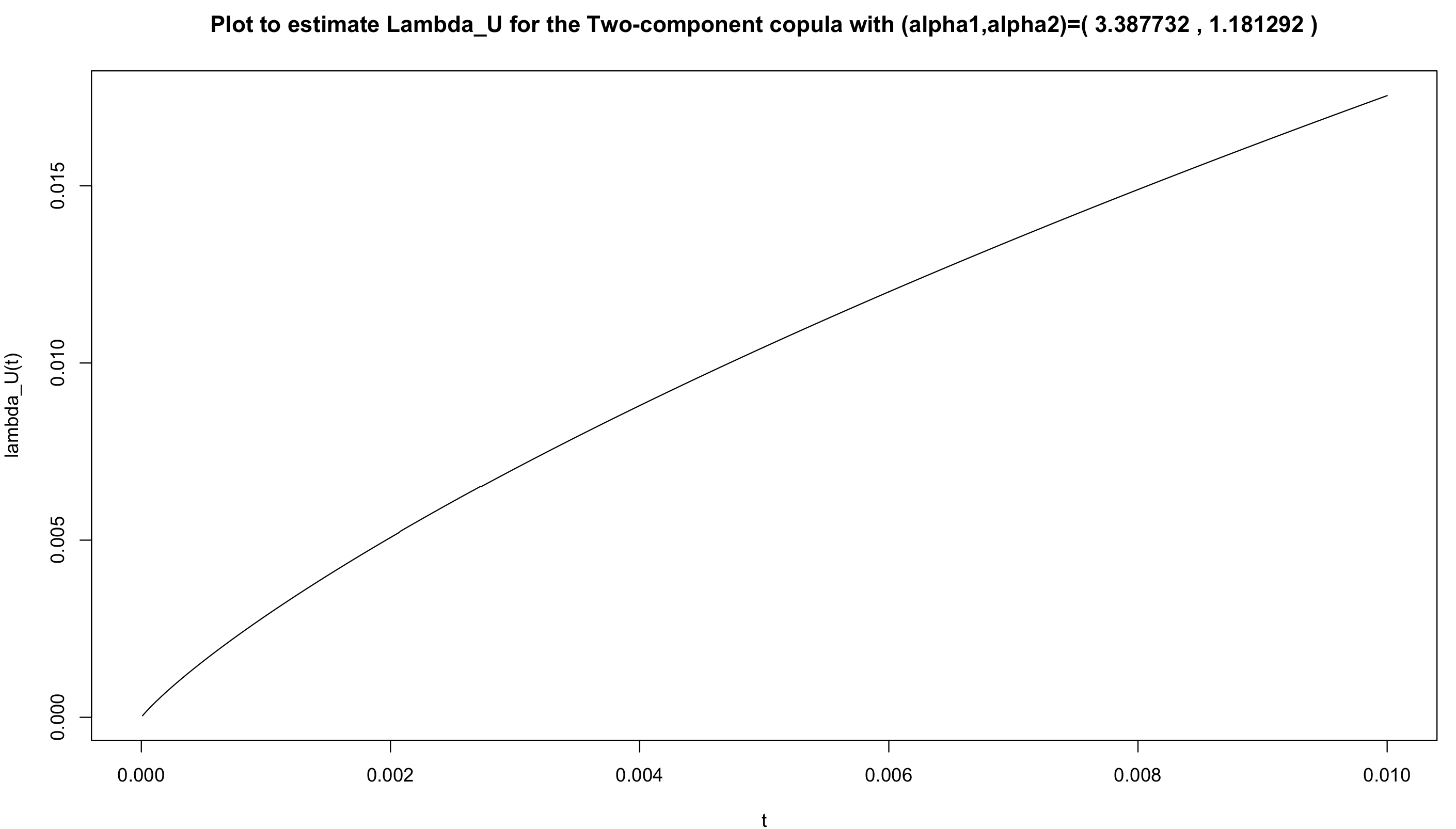}
                \caption{$(\alpha_1,\alpha_2)=(3.387732,1.181292)$.}
                \label{fig:Tcdata.jpg}
        \end{subfigure}%
        \caption{Plot used to estimate $\lambda_U$ for the Two-component copula.}
        \label{fig:GoFdata3.jpg}
\end{figure}

\section{Colour Palette}\label{appendix:colourPalette}
This colour palette was used to shade the plots in Figures \ref{fig:twoCompDensity1} and \ref{fig:twoCompDensity2} to allow the shape of the graph to be seen more clearly. Areas of the graph with larger z-values were shaded using higher ranking colours. The colour palette is the standard rainbow colour palette used in R.

\begin{figure}[htb]
	\begin{center}
		\label{fig:colourPalette}
		\includegraphics[width=\textwidth]{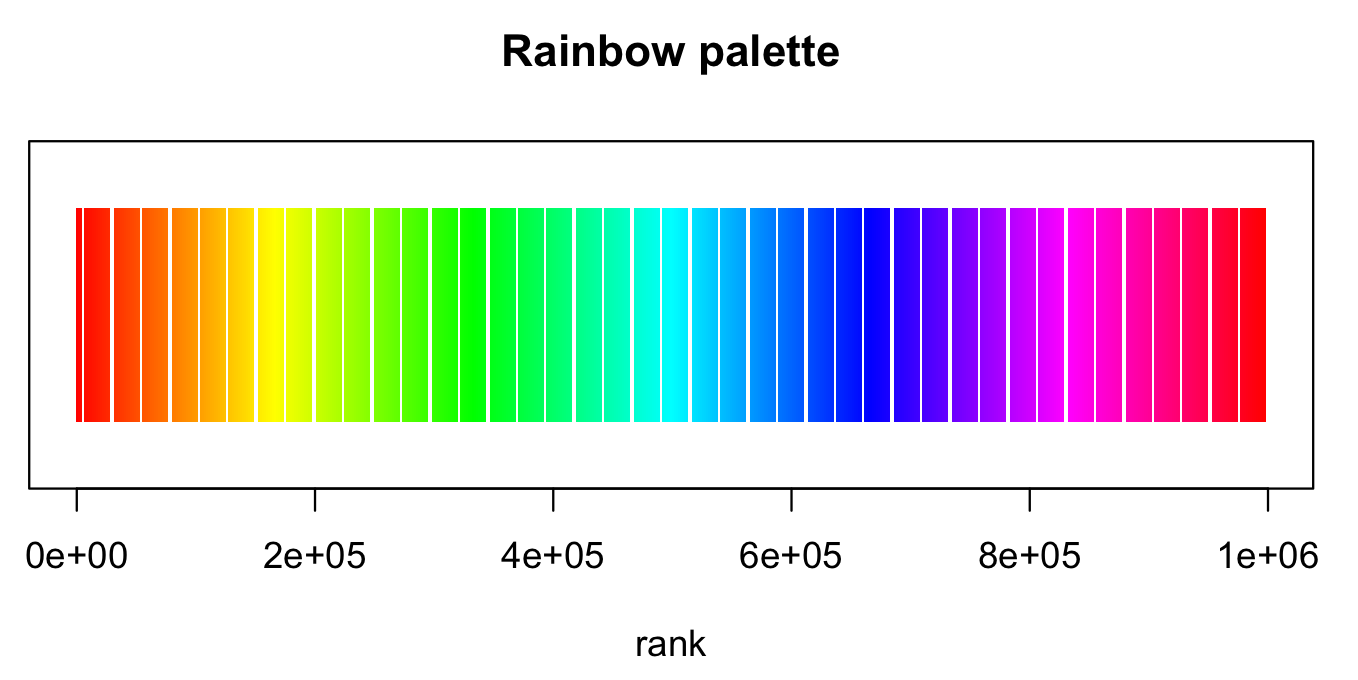}
		\caption[Caption for LOF]{Colour Palette.}
	\end{center}
\end{figure}

\end{document}